\newtheorem{theorem}{Theorem}
\newtheorem{corollary}{Corollary}[section]
\newtheorem{lemma}[corollary]{Lemma}
\newtheorem{proposition}[corollary]{Proposition}
\newcommand{\Prob} {{\mathbb P}}
\newcommand{\E}{{\mathbb  E}}
\newcommand{\R}{{\mathbb{R}}}
\newcommand{\C}{{\mathbb C}}
\def \Im {{\rm Im}}
\def \p {\partial}
\def \Half {{\mathbb H}}
\def \diam {{\rm diam}}
\def \eset {\emptyset}
\newenvironment{definition}[1][Definition]{\begin{trivlist}
\item[\hskip \labelsep {\bfseries #1}]}{\end{trivlist}}
\def \F {{\mathcal F}}
\newcommand{{\pe}}  {\partial_e}
\newcommand {{\lodd}} {{\mathcal J}}
\newcommand{{\inrad}} {{\rm inrad}}
\newcommand {{\cent}} {{\bf c}}
\newcommand {{\eb}}  {{\bf e}}
\newcommand{{\osc}} {{\rm osc}}
\newcommand{{\dyadic}}  {{\mathcal Q}}
\newcommand{{\partition}} {{\mathcal P}}
\newcommand {{\bloop}}  {{\mathcal O}}
\newcommand {{\crad}}  {{\rm crad}}
\newcommand {{\wind}} {{\rm wind}}
\newcommand {{\z}} {{\bf z}}
\newcommand {{\whoknows}} {{\mathcal A}}
\newcommand{{\exc}}{{\mathcal E}}
\newcommand{{\loops}} {{\loop}}
\newcommand {{\N}}  {{\mathbb N}}
\newcommand {{\squares}}{{\mathbb A}}
\newcommand {{\lattice}}  {{\mathcal L}}
\def \bgamma {{\bm \gamma}}
\def \hcap {{\rm hcap}}
\def \eset {\emptyset}
\def \bgamma  {{\boldsymbol{\gamma}}}
\author{Gregory F. Lawler  \thanks{Research supported
by National Science Foundation grant DMS-1513036} \qquad Stephen Yearwood\\
Department of Mathematics\\
University of Chicago}
\title{A new proof of reversibility of $SLE_{\kappa}$ for $\kappa \leq 4$}
\begin{document}
   \maketitle
\begin{abstract}

    We give a new proof of the reversibility of the Schramm Loewner evolution for $\kappa \leq 4$. The main ideas used in the proof are similar to those used in the original proof of this result, given by Zhan.
\end{abstract}
   \section{Introduction}
   \label{intro}
The Schramm Loewner evolution ($SLE_{\kappa}$) is a one parameter family of probability measures on curves in the plane parametrized by
$\kappa = 2/a > 0$.  It   gives the only  candidate for scaling limits of  discrete lattice models exhibiting
conformal invariance in the continuum limit.   We recall the
definition; see \cite{lawler-book} for more detail.
  Let $\Half$ denote the upper half plane in $\C$.  If
 $\gamma:[0,\infty) \rightarrow \overline \Half$ is a curve, we write
 $\gamma_t$ for $\gamma[0,t]$ and $D_t$ for the unbounded component
 of $\Half \setminus \gamma_t$. 
 Chordal $SLE_{\kappa}$ from $0$ to $\infty$ is defined (up to a choice of
parametrization) as a random  curve $\gamma \colon [0, \infty) \to \Half$
with $\gamma(0) = 0$ such that the following holds. Let $g_t$ be
the ``mapping-out function'', that is, the unique conformal transformation
$g_t: D_t \rightarrow \Half$ with $g_t(z) = z + o(1)$ as $z \rightarrow
\infty$.
  Then $g_t$ satisfies 
\begin{equation} \label{loewner}
\dot{g}_{t}(z) = \frac{a}{g_{t}(z) - U_t} \qquad g_0(z) = z
\end{equation}
Under this parameterization, $\hcap(\gamma_t) = at$, where $\hcap$
denotes the half-plane capacity.  
 In other words, we have the
expansion
\[   g_t(z) = z + \frac{at}{z} + O(|z|^{-2}), \;\;\;\;
  z \rightarrow \infty.\]

If $z \in \mathbb{C} \setminus 0$, the solution to \eqref{loewner} holds  for
all $ t < T_z$ where $$T_z = \underset{t}{\text{sup}} \lbrace \text{min} \lbrace |g_s(z) - U_s| : 0 \leq s \leq t \rbrace > 0 \rbrace .$$
To get chordal $SLE_\kappa$ connecting distinct boundary
points in a simply connected domain $D$, one takes the conformal image of this under a conformal map.  Again this is defined up to a change of parametrizaton.

While $SLE$ is a model for curves in equilibrium, the definition uses conditional
probabilities given the path up to a certain time and hence adds an artificial
dynamic. One disadvantage is that some properties that are expected of the
limit curve, in particular reversibility, do not follow immediately. 
 Zhan showed this to be true \cite{zhan-reversibility} for $\kappa \leq 4$, while
 Miller and Sheffield were able to extend these results to $\kappa \in (0,8)$ \cite{ig1, ig2, ig3} by realizing $SLE_{\kappa}$ curves as flow lines of the Gaussian free field.
 
The purpose of this note is to give a new proof of reversibility for $\kappa \leq 4$;  we hope in future work to extend this to $4 < \kappa < 8$ to give a proof that
does not make use of the tools of the Gaussian free field.  While we say that it is a new proof, the basic idea of the proof is the same as that given by Zhan.  
Our hope is that we our argument simplifies some of the
details.   We write $SLE$ for $SLE_\kappa$. 

\begin{itemize}

\item  We compare $SLE $ from $0$ to $x$ in $\Half$ to $SLE $ from $x$ to $0$.
These are probability measures on bounded curves $\gamma$ with $\hcap(\gamma) < \infty$.  While $\hcap(\gamma)$
is a random quantity, it is almost immediate from the definition that the
distribution of $\hcap(\gamma)$  is the same for $SLE$ in both directions.

\item We view $SLE$ connecting two points in $\R$
 as a probability measure on the final mapping-out
functions $g_\gamma$.

\item  We then focus on $SLE$ from $0$ to $x$ and $x$ to $0$ conditioned
to have a specific half-plane capacity.  We show that these two probability
measures agree on the conformal maps $g_\gamma$
for each value of $\hcap[\gamma]$.  By scaling it suffices
to prove this for all $x$ assuming $\hcap[\gamma] = a$.  

\item For each $r \in [0,1]$ we consider the probability measure
$\mu_r$ which corresponds to the following:
\begin{itemize} 
\item  Take $SLE$ from $0$ to $x$ conditioned to have $\hcap = a$
stopped at time $r$, that is, when $\hcap = ra$ giving $\gamma^1$.
\item Given $\gamma^1$, let $\gamma^2$ be $SLE$ from $x$ to $\gamma^1(r)$
in $\Half \setminus \gamma^1$ conditioned so that $\hcap(\gamma^1
\cup \gamma^2) = a$. 
\item  Output $g_\gamma$ where
 $\gamma = \gamma^1 \oplus \tilde \gamma $
where $\tilde \gamma$ is the reversal of $\gamma^2$.

\end{itemize}
This gives a probability measure on transformations $g_\gamma$  
with $\hcap[\gamma] = a$ which we denote by $\mu_r$.

\item  We consider this as a measure on continuous
functions on a fixed closed ball $K = K_h
\subset \Half$ where $h$ is large enough so that $\Im[g_\gamma(z)]
 \geq a$ for all $z \in K$ and $\hcap[\gamma] = a$.
 We show that the Prokhorov distance between
$\mu_r$ and $\mu_s$ is less than $c|s-r|^{1+\delta}$ for 
some $\delta > 0$.  We conclude that $\mu_s$ is a constant
function of $s$.  In particular, $\mu_0 = \mu_1$ which is the
main result. 

\item  The main local commutation relation which 
is similar to the relations in  \cite{zhan-reversibility}
and  \cite{dubedat-duality} 
   is expressed in terms of Radon-Nikodym
derivatives of independent $SLE$ paths tilted by a Brownian loop term.  This relation is nicest for $\kappa \leq 4$, but we discuss the $\kappa < 8$ case
here in order to prepare for future work.

\end{itemize}

The paper is organized as follows. In Section \ref{prelim}, we review
 $SLE$ connecting two points on the boundary, together with some other basic notation, and then we state the main theorem of this paper. In Section \ref{comm}, we describe the commutation relation, and show explicitly that the measures under consideration have the same Radon-Nikodym derivative with respect to a particular measure. In Section \ref{main} we prove the main theorem in a sequence of steps, relying on a few Loewner chain estimates. Finally, in Section \ref{finalsec}, we give the (delayed) proof of a basic Bessel process fact.

Throughout this paper we fix $\kappa = 2/a \in (0,8)$ and allow 
constants, both implicit and explicit,  to depend
on $\kappa$.  We write just $SLE$ for $SLE_\kappa$.
  For a number of the results, we need $\kappa \leq 4$ and we
say that.  Let
\[  b = \frac{6-\kappa}{2\kappa} = \frac{3a-1}{2} \]
be the boundary scaling exponent.

\section {\texorpdfstring{$SLE$}{Lg} in \texorpdfstring{$\Half$}{Lg} from \texorpdfstring{$x_1$}{Lg} to \texorpdfstring{$x_2$}{Lg}} \label{prelim}

There are   several equivalent characterizations of $SLE$ connecting two real points;  here we will take the perspective of   $SLE$ from $0$ to $x \in \R\setminus \{0\}$ as $SLE$ from $0$ to $\infty$ in $\Half$ tilted by the partition function $\Psi$. For simply connected domains and locally analytic boundary
points $z,w$, the partition 
function for $SLE$ is $\Psi_D(z,w) = H_{\p D}(z,w)^b$.  Here
$H_{\p D}(z,w)$ is the boundary Poisson kernel normalized so that $H_\Half(0,x)
 = x^{-2}$. 
We also define $\Psi_\Half(x,\infty) = 1$ for all $x \in \Half$. 
The partition function
satisfies the scaling rule: if $f: D \rightarrow f(D)$ is a conformal transformation,
then 
\[             \Psi_D(z,w) = |f'(z)|^b \, |f'(w)|^b \, \Psi_{f(D)}(f(z), f(w)). \]
Although this definition of $\Psi_D(z,w)$ requires that $z,w$ be locally analytic boundary points,   ratios of partition functions
can often be defined using the scaling rule as we will see below.

Suppose that $g_t$ satisfies \eqref{loewner} where $U_t = - B_t$ is a standard
Brownian motion defined on a probability space $(\Omega,\F,\Prob)$.  Let $\gamma(t)$
denote the corresponding $SLE_\kappa$ curve and we write $\gamma_t = \gamma(0,t]$.  Under
the measure $\Prob$, $\gamma$ has the distribution of an $SLE_\kappa$ path from $0$
to $\infty$.  We will tilt the measure $\Prob$ using an appropriate local martingale
to get $SLE$ from $0$ to $x$.   Suppose $x \in \R \setminus \{0\}$, and let
 $X_t  = g_t(x) - U_t$ and   $T = T_x= \inf \{ t>0: X_t =0 \}$.   For $t<T$, let $D_t$ be the unbounded component of $\Half \setminus \gamma_t$, and define the local martingale $M_t$
 formally by  
\[ M_t  = x^{1-3a}\dfrac{\Psi_{H_t}(\gamma(t), x)}{\Psi_{D_t}(\gamma(t), \infty)}, \qquad t<T. \]
The partition functions on the right-hand side are not well defined but the ratio is well
defined using the scaling rule,
\[\dfrac{\Psi_{D_t}(\gamma(t), x)}{\Psi_{D_t}(\gamma(t), \infty)} = \dfrac{\left |g'( \gamma_t)\right |^b \,g'(x)^b\,\Psi_{D_t}(U_t, g_t(x))}{\left |g'(\gamma_t)\right |^b \,g'(\infty)\, \Psi_{\Half}(U_t, \infty)} = g'_t(x)^b X_t^{1-3a}.\] While this is formal, this shows that we can define 
\[     M_t = \left(\frac{X_t}{X_0}\right)^{1-3a} \, g_t'(x)^b, \;\;\;\; t < T  \]
and  one can use It\^o's formula and the Loewner equation to see that  
$M_t$ is a local martingale satisfying $M_0 = 1$ and 
\begin{equation} \label{jul2.2}
       dM_t = \frac{1-3a}{X_t} \, M_t \, d B_t, \;\;\;\; 0 \leq t < T.
       \end{equation}
Let $\Prob^*$ be the measure obtained by tilting by $M_t$.  More precisely,
if $\tau < T$ is a stopping time such that $M_{t \wedge \tau}$ is a martingale
and $V$ is an event measurable with respect to $\F_{t \wedge \tau}$, then
$\Prob^*(V) = \E[M_{t \wedge \tau} \, 1_V]$. The Girsanov theorem states that
\[             dB_t = \frac{1-3a}{X_t} \, dt + d W_t , \;\;\;\; 0 \leq t < T , \]
where $W_t$ is a  standard Brownian motion with respect to $\Prob^*$
and hence
\[             dX_t =      \frac{1-2a}{X_t} \, dt + d W_t, \;\;\;\;
  0 \leq t < T.\]
 The following is well known.

 \begin{proposition}  Suppose $0 < x$ and $g_t$ is the solution to
the Loewner equation \eqref{loewner} where $U_t = g_t(x) - X_t$  and $X_t$ satisfies
\begin{equation}  \label{jul1.1}
     dX_t = \frac{1-2a}{X_t} \, dt + dW_t,\;\;\;X_0 = x, \;\;\;\; 0 \leq t < T ,
     \end{equation}
where  $W_t$ is a standard Brownian motion  and
$T = \inf\{t:X_t = 0\}$.  Then $\gamma(t), 0 \leq t \leq T$ has the
distribution of $SLE_\kappa$ from $0$ to $x$ parametrized by half
plane capacity from infinity stopped at the time that $\gamma_T$
disconnects $x$ from infinity.  In particular, $\hcap[\gamma_T] = aT$.

\end{proposition}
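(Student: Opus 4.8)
The plan is to verify that tilting chordal $SLE_\kappa$ from $0$ to $\infty$ by the local martingale $M_t$ produces precisely $SLE_\kappa$ from $0$ to $x$, and to identify the terminal behavior. The excerpt has already done the analytic heart of the matter: It\^o's formula gives \eqref{jul2.2}, Girsanov gives the drift change $dB_t = \frac{1-3a}{X_t}\,dt + dW_t$, and substituting into $dX_t = dg_t(x) - dB_t = \frac{a}{X_t}\,dt + dB_t$ yields exactly \eqref{jul1.1}. So the first step is to note that the process $(\gamma_t, g_t, X_t)$ under $\Prob^*$ is a weak solution of the coupled system consisting of the Loewner equation \eqref{loewner} driven by $U_t = g_t(x) - X_t$ together with \eqref{jul1.1}, and that this system has a unique law. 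Conversely, given any process satisfying the hypotheses of the Proposition, reversing the Girsanov computation shows its law is $\Prob^*$. Thus it suffices to show that $\Prob^*$ is the law of $SLE_\kappa$ from $0$ to $x$ (stopped at disconnection), which is essentially the \emph{definition} of the latter as the $\Psi$-tilt, modulo checking that $M_t$ is genuinely a martingale up to the relevant stopping times.

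The second step addresses that martingale-versus-local-martingale gap. Since $1 - 3a$ may have either sign, one cannot immediately conclude $M_t$ is a true martingale on $[0,T)$; instead one works with a localizing sequence of stopping times $\tau_n \uparrow T$ (for instance $\tau_n = \inf\{t : X_t \le 1/n \text{ or } |\gamma_t| \ge n\} \wedge n$), on each of which $M_{t\wedge \tau_n}$ is bounded, hence a genuine martingale, so the tilting $\Prob^*_n(V) = \E[M_{t\wedge\tau_n} 1_V]$ is well defined and consistent, defining $\Prob^*$ on $\bigcup_n \F_{\tau_n} = \F_{T-}$. The process $X_t$ under $\Prob^*$ solves \eqref{jul1.1}, which is (a time-change of) a Bessel-type equation; for $\kappa \le 4$ one has $1 - 2a \le 0$, and one checks that $X_t$ hits $0$ in finite time a.s., so $T < \infty$ a.s. under $\Prob^*$. (For general $\kappa < 8$ the analysis of whether and when $X$ hits $0$ is exactly the delayed Bessel fact promised for Section \ref{finalsec}.)

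The third step is the geometric identification of the stopping time $T$. By the standard Loewner-chain dictionary, $T_x = \inf\{t : X_t = 0\}$ is precisely the first time $x$ is absorbed into the hull, i.e. the first time $\gamma_t$ disconnects $x$ from $\infty$ in $\Half$: as long as $X_t = g_t(x) - U_t \ne 0$, the point $x$ lies on $\partial D_t$ and $g_t(x)$ is well defined, and $X_t \to 0$ corresponds to $x$ being swallowed. Finally, the capacity statement $\hcap[\gamma_T] = aT$ is immediate from the parametrization: under \eqref{loewner} one has $\hcap[\gamma_t] = at$ for all $t < T$, and letting $t \uparrow T$ with continuity of capacity along the increasing family of hulls gives $\hcap[\gamma_T] = aT$.

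I expect the only real obstacle to be the passage from local martingale to a bona fide change of measure on the whole interval $[0,T)$ — that is, making the localization argument clean and confirming $T < \infty$ a.s. under $\Prob^*$ — since everything else is either definitional or a direct consequence of the It\^o/Girsanov computation already displayed in the excerpt. The behavior of $X_t$ near $0$ (does it hit $0$, and in finite time?) is genuinely $\kappa$-dependent and is the point where the restriction $\kappa \le 4$, or alternatively the deferred Bessel process fact, enters; for $\kappa \le 4$ this is classical, so in that regime the proof is essentially complete once the localization is in place.
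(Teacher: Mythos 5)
There is a genuine gap, and it is the step the paper itself identifies as the whole proof. In the paper, ``$SLE_\kappa$ from $0$ to $x$'' is \emph{defined} in the introduction as the conformal image of $SLE_\kappa$ from $0$ to $\infty$ under a conformal map of $\Half$; the tilting by $\Psi$ is only a ``perspective,'' and indeed the partition functions entering $M_t$ are purely formal (only their ratio is defined via the scaling rule). Your argument reduces the proposition to the assertion that $\Prob^*$ is the law of $SLE$ from $0$ to $x$ and then declares this to be ``essentially the definition.'' That is circular with respect to the object the proposition actually refers to: what must be checked is that the tilted process (equivalently, the curve driven via \eqref{jul1.1}) agrees with the conformal-image definition. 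The paper's proof consists precisely of this verification: take $\gamma$ an $SLE_\kappa$ from $0$ to $\infty$, a M\"obius map $F:\Half\to\Half$ with $F(0)=0$, $F(\infty)=x$, reparametrize $F\circ\gamma$ by half-plane capacity, and compute (via the transformation rule for Loewner chains under M\"obius maps and It\^o's formula) that its driving function has exactly the law determined by \eqref{jul1.1}. Your proposal never performs this target-change computation; the Girsanov/It\^o material you invoke was already displayed before the proposition and only shows that the tilted measure produces the SDE, not that either one is the conformally defined $SLE$ from $0$ to $x$.

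Two secondary inaccuracies: the finiteness of $T$ under \eqref{jul1.1} is not where $\kappa\le 4$ enters, nor is it the fact deferred to Section \ref{finalsec} (that proposition is a maximal estimate for the \emph{conditioned} process \eqref{conbessel}); since $a>1/4$, the process \eqref{jul1.1} is a Bessel-type process of dimension $3-4a<2$ and hits $0$ in finite time for all $\kappa<8$, as the paper notes. Also, with the paper's convention $U_t=-B_t$ one has $X_t=g_t(x)+B_t$, so $dX_t=\frac{a}{X_t}\,dt+dB_t$; your intermediate expression $dX_t=dg_t(x)-dB_t$ has a sign slip, though your conclusion matches \eqref{jul1.1}. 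The localization and weak-uniqueness points you raise are reasonable housekeeping, but they are not a substitute for the missing identification with the conformal-image definition.
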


Indeed to verify this, one needs only check that the conformal image of
$SLE$ from $0$ to $\infty$ by a conformal transformation $F:\Half \rightarrow
\Half$ with $F(0) = 0, F(\infty) =x$ gives the same distribution on 
the driving function as \eqref{jul1.1}.

Similarly, if $X_t$ satisfies \eqref{jul1.1} and
we define $\tilde U_t = g_t(0) - X_t$ with corresponding
curve $\tilde   \gamma$, then 
  $\tilde \gamma(t), 0 \leq t \leq T$ has the
distribution of $SLE_\kappa$ from $x$ to $0$ parametrized by half
plane capacity from infinity stopped at the time that $\gamma_T$
disconnects $0$ from infinity.

While we may use the same $X_t$ for $SLE$ in both directions, the distribution
of the driving functions $U_t,\tilde U_t$ are different.  Indeed, $U_0 = x,
\tilde U_0 = 0$.  For this reason, we cannot conclude the reversibility immediately from this
fact.  One thing that does follow   is that the distribution of the
stopping time $T$ is the same for $SLE$ from $0$ to $x$ as for $SLE$ from $x$ to $0$.  It is
the same as the time to reach the origin for the Bessel process
\eqref{jul1.1}. 
Since  $a>   1/4$  the process reaches
the origin in finite time.

There is a significant difference between $\kappa \in (0,4]$ and $\kappa \in (4,8)$.
Let us consider $SLE$ from $0$ to $x $ with $x>0$
 stopped at time $T$.   The following statements
are with probability one with respect to the tilted measure $\Prob^*$. 
\begin{itemize}
\item If $0 < \kappa \leq 4$, then $\gamma(t), 0 \leq t \leq T$
is a simple curve with $\gamma(0) =0, \gamma(T) = x, \gamma(0,T) \subset \Half$.
\item If $4 < \kappa < 8$, then $\gamma(T) \in (x,\infty)$.  Although the $SLE$ curve continues
after time $T$,  $D_\infty$, the unbounded connected component of $\Half \setminus \gamma$
is the same as the unbounded connected component of $\Half \setminus \gamma_T$.
\end{itemize}
In particular, for $\kappa \leq 4$, the domain $D_\infty$ determines the entire curve while
for $4 < \kappa < 8$, the domain $D_\infty$ gives only the ``curve as viewed from infinity'',
that is $\gamma \cap \overline D_\infty$.
We will prove reversibility for the domain $D_\infty$.  In this paper, we do
the $\kappa \leq 4$ case reproving Zhan's result.

\begin{theorem} \label{maint} If $\kappa \leq 4$, the distribution of $D_\infty$, is the same for $SLE$
from $x_1$ to $x_2$ and for $SLE$ from $x_2$ to $x_1$.  Equivalently, the distribution
of the conformal transformation $g_\infty$ is the same.
\end{theorem}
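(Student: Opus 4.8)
The plan is to carry out the programme outlined in the introduction. First, using the scaling rule for $\Psi$ together with the fact that $\hcap[\gamma]=aT$ where $T$ is the hitting time of $0$ for the Bessel-type process \eqref{jul1.1} — whose law is the same for $SLE$ from $0$ to $x$ and from $x$ to $0$ — it suffices to fix $x>0$ and show that the law of the final mapping-out function $g_\gamma$ is the same for $SLE$ from $0$ to $x$ and for $SLE$ from $x$ to $0$, \emph{each conditioned on $\hcap[\gamma]=a$}; the unconditioned (hence the general) statement then follows by integrating against the common law of $aT$ and undoing the scaling. For the conditioned statement I would introduce the interpolating family $\mu_r$, $r\in[0,1]$, exactly as in the introduction: sample the conditioned forward $SLE$ from $0$ to $x$ stopped at capacity $ra$, obtaining $\gamma^1$; given $\gamma^1$, sample $\gamma^2$, an $SLE$ from $x$ to $\gamma^1(r)$ in $\Half\setminus\gamma^1$ conditioned on $\hcap(\gamma^1\cup\gamma^2)=a$; and output $g_\gamma$ for $\gamma=\gamma^1\oplus\tilde\gamma$. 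Since $g_\gamma$ depends only on the hull and not on its orientation or on where one cuts it into a forward and a backward piece, $\mu_1$ is the law of $g_\gamma$ for the conditioned forward $SLE$ and $\mu_0$ the law for the conditioned backward $SLE$; so the theorem reduces to showing that $r\mapsto\mu_r$ is constant on $[0,1]$.

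To prove constancy I would estimate the Prokhorov distance $d_P$ between $\mu_r$ and $\mu_{r+\epsilon}$, viewing both as Borel probability measures on $C(K)$ for the fixed ball $K=K_h$ of the introduction. Both measures factor through the same first stage — sample $\gamma^1$, a conditioned forward $SLE$ of capacity $ra$, whose law does not depend on which of $\mu_r,\mu_{r+\epsilon}$ we have in mind — and then sample ``the remainder'' in $D:=\Half\setminus\gamma^1$: for $\mu_r$ a single backward $SLE$ from $x$ to $\gamma^1(r)$, and for $\mu_{r+\epsilon}$ a forward $SLE$ of capacity $\epsilon a$ from $\gamma^1(r)$ spliced with a backward $SLE$ from $x$ to its tip, each suitably conditioned on the total capacity being $a$. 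Here I would invoke the local commutation relation of Section~\ref{comm}: in each case the law of the remainder is the image, under the appropriate conformal maps, of a pair of \emph{independent} $SLE$ paths started from $\gamma^1(r)$ and from $x$, tilted by a Brownian-loop term, and run until they meet — the two descriptions corresponding to stopping this single tilted-product measure at two nearby capacity configurations, $(0,c_0)$ and $(\epsilon a,c_1)$ with $c_0-c_1=O(\epsilon)$. The commutation relation is precisely what makes the $O(\epsilon)$ discrepancy between these two configurations cancel at first order at the level of the induced law of $g_\gamma$.

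Combining this with the Loewner chain estimates of Section~\ref{main} — controlling how an extra forward increment of capacity $\epsilon a$ perturbs the mapping-out function on $K$ and perturbs the Brownian-loop interaction term, in particular near the point where the forward and backward curves meet — I would deduce
\[
  d_P(\mu_r,\mu_{r+\epsilon})\;\le\;c\,\epsilon^{1+\delta}
\]
for some $\delta>0$ and some $c$ uniform in $r\in[0,1-\epsilon]$ (and, after rescaling, in $x$). Given this, for any $0\le s<t\le 1$ and any $n\in\N$, chaining along the partition $r_i=s+i(t-s)/n$ gives $d_P(\mu_s,\mu_t)\le\sum_{i=0}^{n-1}c\big((t-s)/n\big)^{1+\delta}=c\,(t-s)^{1+\delta}n^{-\delta}\to 0$; hence $\mu_s=\mu_t$ for all $s,t$, and taking $s=0,t=1$ yields $\mu_0=\mu_1$, i.e.\ the conditioned reversibility, whence the theorem as explained above.

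The main obstacle is the super-linear bound $\epsilon^{1+\delta}$. A soft argument would only give $O(\epsilon)$ per step, which chains to a nonzero total; extracting genuine $o(\epsilon)$ decay requires the exact commutation identity (to kill the first-order term) plus delicate Loewner estimates showing the leftover is of order $\epsilon^{1+\delta}$ with a constant uniform in $r$ — including $r$ near $1$, where the remainder has little capacity left — and in the rescaled endpoint $x$. This is also where $\kappa\le 4$ is used: the curves are simple and meet at a single point, the hull is the curve itself, and the Brownian-loop term is most tractable; nevertheless the estimates near the meeting point degenerate as $\kappa\uparrow 4$ (the curves approach $\R$ and the relevant derivative and Poisson-kernel bounds deteriorate), and making everything uniform there is the crux of the argument.
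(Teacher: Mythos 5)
Your plan is correct and is essentially the paper's own argument: reduce to paths conditioned to have a fixed half-plane capacity (using that the capacity, equivalently the Bessel hitting time, has the same law in both directions), interpolate with the measures $\mu_r$, use the commutation relation to couple $\mu_r$ and $\mu_{r+\epsilon}$ so that they agree except on a piece of capacity $a\epsilon$ with common endpoints near the meeting point, bound the Prokhorov distance by $c\,\epsilon^{1+\delta}$ via Loewner estimates, and conclude constancy by the chaining argument you spell out. The only imprecision is attributing the superlinear gain to a ``first-order cancellation'' produced by the commutation identity itself; in the paper the commutation relation only permits the re-ordered sampling that localizes the difference to that small piece, and the gain comes from the deterministic Loewner expansion --- the two swapped hulls have equal capacity $a\epsilon$ and, off an event of negligible probability, diameter $O(\epsilon^{1/2}\log^2(1/\epsilon))$, so their mapping-out maps differ by $O(\epsilon^{5/4})$ on the fixed ball --- with constants depending only on the fixed $\kappa$ (no degeneration as $\kappa\uparrow 4$ is needed).
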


Our proof is in the same spirit as Zhan's proof.  One novel aspect is that we choose a 
realization of the Bessel process \eqref{jul1.1} in a two step process: we first choose
a value   $T = t_0$ and then given $T$ we run the Bessel process conditioned so that
$T = t_0$.

If $X_t$ satisfies \eqref{jul1.1} where $W_t$ is a $\Prob^*$-Brownian motion, then
the transition probability of the process killed at the origin is
\[  q_s(x,y) = \frac{y}{x^{4a+1}\, s^{2a+\frac 12  } }\, \exp\left\{-\frac{x^2+y^2}{2s}
 \right\} \, h\left(\frac{xy}{s} \right), \]
 where $h = h_a$ is an entire function with $h(0) > 0$. 
The density of $T$  in the measure $\Prob^*$  is a constant times  
\begin{equation}\label{newyear}
  \phi(x,t) := x^{4a-1} \, t^{-\frac 12 -2a} \, \exp\left\{-\frac{x^2}{2t}
  \right\}. 
  \end{equation}
    The Bessel process conditioned so that $T = t_0$ is this
 process tilted by the $\Prob^*$-martingale
\begin{equation} \label{jul2.3}
 N_t := \phi(X_t, t_0-t), \qquad 0 \leq t <  t_0 
 \end{equation}
  which satisfies
\begin{equation}  \label{aug1.1}
   dN_t = N_t \left[\frac{4a-1}{X_t} - \frac{X_t}{t_0-t}\right] \, d W_t . 
   \end{equation}
Formally one can write $\phi(X_t, t_0-t) = \E^*[\mathbbm{1}_{T=t_0}|X_t]$ which can be thought of as a Doob martingale in the measure $\Prob^*$. Otherwise, the unconvinced reader may engage in a brief It\^o calculus exercise to derive
\eqref{aug1.1}.  Since $M_t$ is a $\Prob$ local martingale and $N_t$
is a $\Prob^*$ local martingale, we can see that $\tilde M_t := M_t \, N_t$
is a $\Prob$ local martingale.  Again, one can check this again using
It\^o calculus.  If we let
 \[   \tilde M_t= \tilde M_{t,t_0}  =
M_t \, N_t = x^{1-3a}\,  X_t ^{3a-1} \, g_t'(x)^{(3a-1)/2}
 \, \phi(X_t,t_0-t), \;\;\; 0 \leq t < t_0 , \]
then using \eqref{jul2.2} and \eqref{jul2.3} we see that $\tilde M_t, 0 \leq t < t_0$ is a $\Prob$-martingale
 satisfying
\[   d \tilde M_t = \left[\frac{ a}{X_t} - \frac{X_t}{t_0-t}\right] \,
 \tilde  M_t \, dB_t, \qquad
   0 \leq t <t_0. \]
If we tilt in the Girsanov sense as above by $\tilde{M}_t$ giving  the new measure $\hat{\Prob} $ we have 
 \[   dB_t = \left[\frac{a}{X_t} - \frac{X_t}{t_0-t}\right] \, dt + d \tilde W_t ,\]
\[    dX_t = d[g_t(x) + B_t] = \left[\frac{2a}{X_t} - \frac{X_t}{t_0 - t}\right] \, dt
    + d \tilde W_t, \]
where $\tilde{W}_t$ is a $\tilde{\Prob}$-Brownian motion.

\begin{definition}
 Suppose $x_1,x_2$ are distinct real numbers, $0 < \kappa < 8$,
  and $0 < t_0 < \infty$.  Then $SLE_\kappa$ from $x_1$ to $x_2$ in $\Half$
  of time duration $t_0$ is defined to be the solution of \eqref{loewner}
  where the driving function $U_t = g_t(x_2) - X_t$, and $X_t$
  satisfies
\begin{equation}  \label{conbessel}  
    dX_t =  \left[\frac{2a}{X_t} - \frac{X_t}{t_0 - t}\right] \, dt
    + d W_t, \;\;\;\;  X_0 = x_2 - x_1, \end{equation}
    \[  U_t = g_t(x_2) - X_t = x_2 + \int_0^t \frac{a\, ds}{X_s}
       - X_t , \]
where $W_t$ is a standard Brownian motion.  
  \end{definition}
  
 If $q_t(x,y)$ denotes the transition probability for a Bessel process
satisfying \eqref{jul1.1}, killed upon reaching the origin, then the density for a process satisfying \eqref{conbessel} is
  \[   \psi_t(x,y;t_0) = q_t(x,y) \, \frac{ \phi(y,t_0 - t)} {\phi(x,t_0)}.\] We will need one very believable fact about this process.  The proof uses standard techniques but we delay the proof to Section \ref{finalsec}.  This estimate
 is not optimal but will be more than sufficient for our purposes.
\begin{proposition}  \label{finalprop}
For every $0 < \kappa < 8$, there
 exists $c < \infty, u > 0$ such that if
 $X_t$ satisfies \eqref{conbessel},   
 then   for all $r > 0$, 
\[   \Prob\left\{\max_{0 \leq t \leq t_0}  |U_t-x_1|  \geq 
\sqrt{t_0} \,(|x_2-x_1| + r^2)
    \right\} \leq  c \, e^{-ur}.\]
  \end{proposition}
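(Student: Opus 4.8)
\medskip
\noindent\textbf{Proof idea.}
The plan is to reduce to $t_0 = 1$ and $x_1 = 0$ by Brownian scaling and translation, bound $\max_{0\le t\le1}|U_t|$ in terms of two error quantities---the amount by which $X$ overshoots its starting point and $a\int_0^1 X_s^{-1}\,ds$---and prove exponential tail bounds for each, the second being the real work. By Brownian scaling, $s\mapsto t_0^{-1/2}X_{st_0}$ solves \eqref{conbessel} with $t_0$ replaced by $1$ and has associated driving function $s\mapsto t_0^{-1/2}U_{st_0}$; using also reflection symmetry (so that $x_2 > x_1$) and a translation ($x_1 = 0$), it suffices to find $c,u$ depending only on $\kappa$ with $\Prob\{\max_{0\le t\le1}|U_t| \ge \bar x + r^2\}\le c\,e^{-ur}$ for all $r>0$, where $X$ solves \eqref{conbessel} with $t_0 = 1$, $X_0 = \bar x>0$, so $X_t>0$ on $[0,1)$, $X_1 = 0$, and $U_t = g_t(x_2) - X_t = \bar x - X_t + a\int_0^t X_s^{-1}\,ds$ with $a\int_0^1 X_s^{-1}ds = g_1(x_2)-x_2$ a.s.\ finite. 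Since $X_t\ge0$ and the integral is nonnegative, $-\sup_{s\le1}X_s \le U_t \le \bar x + a\int_0^1 X_s^{-1}ds$ for $t\le1$; with $E := (\sup_{s\le1}X_s - \bar x)_+$ this gives $\max_{0\le t\le1}|U_t| \le \bar x + E + a\int_0^1 X_s^{-1}ds$, so it is enough to bound $\Prob\{E\ge r^2/2\}$ and $\Prob\{a\int_0^1 X_s^{-1}ds\ge r^2/2\}$.

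For $E$: the drift in \eqref{conbessel} satisfies $\frac{2a}{x}-\frac{x}{1-t}\le\frac{2a}{x}$, so by the one-dimensional comparison theorem $X_t\le Y_t$ on $[0,1)$, where $Y$ is the Bessel process of dimension $d = 4a+1>2$ (recall $a>\tfrac14$) driven by the same Brownian motion with $Y_0 = \bar x$, and so $E\le(\sup_{s\le1}Y_s-\bar x)_+$. Writing $Y_s^2 = \bar x^2 + (4a+1)s + 2\int_0^s Y_u\,dW_u$ and stopping when $Y$ first reaches $\bar x+\lambda$, the exponential inequality for continuous martingales (quadratic variation $\le(\bar x+\lambda)^2$ up to that time) gives $\Prob\{E\ge\lambda\}\le e^{-\lambda^2/32}$ once $\lambda$ exceeds a $\kappa$-dependent constant, uniformly in $\bar x$; with $\lambda = r^2/2$ this is far more than needed.

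The bound on $a\int_0^1 X_s^{-1}ds$ is the crux. I would use that \eqref{conbessel} is the SDE of the Bessel bridge of dimension $4a+1$ from $\bar x$ to $0$ over $[0,1]$ (equivalently, of the Bessel process of dimension $3-4a$---the driving process of $SLE_\kappa$ from $0$ to $x$---conditioned to be absorbed at $0$ at time $1$), so that near $s=1$, $X_s$ is of order $\sqrt{1-s}$, $\int_0^1 X_s^{-1}ds$ is typically $O(1)$, and the point is its upper tail. Control it through the time $X$ spends near $0$:
\[ a\int_0^1 X_s^{-1}\,ds \;\le\; a + a\sum_{k\ge0}2^{k+1}\,\mathrm{Leb}\{s\in[0,1]:X_s\le 2^{-k}\}, \]
so it suffices that $\mathrm{Leb}\{s:X_s\le\epsilon\}$ have exponential tails with enough decay in $\epsilon$ for the dyadic sum to have an exponential tail. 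On $[0,1-\delta]$ one bounds $X$ below by the solution $Z$ of $dZ = (\frac{2a}{Z}-\frac{Z}{\delta})dt + dW$, a Bessel-type diffusion of dimension $4a+1>2$ that never reaches $0$; a scale-function estimate for the chance of descending below $\epsilon$, a small-ball (principal-eigenvalue) estimate for the chance of then staying below $\epsilon$ for an interval of length $\ell$, and the fact that a contribution of order $L$ off $[1-\delta,1]$ forces such an $\ell$ with $\ell\gtrsim L\epsilon$, together give $\Prob\{a\int_0^{1-\delta}X_s^{-1}ds\ge L\}\le c\,e^{-cL}$ uniformly in $\bar x$. For the endpoint piece, the formulas for $q$ and $\phi$ yield $\psi_s(\bar x,y;1)\propto y^{4a}e^{-y^2/(2s(1-s))}h(\bar x y/s)$, hence $\Prob\{X_s\le\epsilon\}\lesssim(\epsilon/\sqrt{s(1-s)})^{4a+1}\wedge1$ uniformly in $\bar x$, and a dyadic-in-$(1-s)$ small-ball argument exploiting the self-similarity of the bridge near its endpoint again gives an exponential tail in $L$ for $a\int_{1-\delta}^1 X_s^{-1}ds$. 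Taking $L = r^2/2$ finishes the estimate, with room to spare.

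The delicate step is the endpoint estimate: one must show, uniformly in $\bar x$, that a Bessel bridge of dimension $\ge2$ does not linger anomalously near $0$ on a macroscopic fraction of an interval abutting its absorption time. The cleanest route appears to be to split $[1-\delta,1]$ into dyadic blocks $[1-2^{-j},1-2^{-(j-1)}]$, rescale each by its length so that the relevant piece of $X$ becomes a Bessel-type process of dimension $4a+1$ run for unit time, and reduce everything to one small-ball estimate for Bessel processes of that fixed dimension; summing the geometric series in $j$ then recovers both the correct typical size and the exponential tail.
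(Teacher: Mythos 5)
Your overall reduction (scaling to $t_0=1$, the two--sided bound of $|U_t-x_1|$ by $|x_2-x_1|$, the overshoot of $X$ above its starting point, and $a\int_0^{t_0}X_s^{-1}\,ds$) is the same as the paper's, and your treatment of the overshoot --- comparison with the dimension $4a+1$ Bessel process obtained by discarding the negative drift, then the exponential martingale inequality for $Y^2$ --- is a valid alternative to the paper's argument, which instead changes measure back to the unconditioned Bessel via the martingale $N_t=\phi(X_t,t_0-t)$ and reads off a Gaussian tail from the ratio of $N$ at the hitting time to $N_0$. The crux, as you say yourself, is the tail of $a\int_0^{t_0}X_s^{-1}\,ds$, and there your sketch has a genuine flaw: you argue that a contribution of order $L$ from the level set $\{X\le\epsilon\}$ ``forces'' a single interval of length $\ell\gtrsim L\epsilon$ on which $X$ stays below $\epsilon$, to which you would apply a small-ball/principal-eigenvalue estimate. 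That implication is false: a large occupation of the band near level $\epsilon$ can be accumulated through many short visits separated by excursions above $\epsilon$, with no single long sojourn, so the one-sojourn small-ball bound does not control the event you need; what is required is an exponential tail for the occupation (equivalently, for the level-$n$ contribution $I_n$) itself, which needs an iteration/renewal or Kac-type moment argument. In addition, your endpoint analysis on $[1-\delta,1]$ is only asserted: after the dyadic-in-time rescaling each block is again a conditioned Bessel on a horizon of order one, started from a possibly atypically small entry value, so the per-block tail bound you invoke, uniform in that entry value, is essentially the original problem and is not supplied; nor do you carry out the bookkeeping showing the per-scale tails decay fast enough for the weighted union bound over scales to close.

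For comparison, the paper closes exactly these gaps with a short argument you could adopt: using the explicit conditioned density $\psi_t(x,y;t_0)=q_t(x,y)\phi(y,t_0-t)/\phi(x,t_0)$ it proves the first-moment bound $\E^x[I_n]\le c_*2^{-n}$ of \eqref{aug23.1} uniformly in the starting point and the remaining horizon (this is where $a>1/4$ enters, and the uniformity in $t_0$ handles the behaviour near the terminal time automatically); Markov's inequality plus iteration of the strong Markov property then upgrades this to $\Prob\{I_n\ge r\,2^{-n}\}\le c'e^{-ur}$, and a union bound with weights $r(2/3)^n$ gives the exponential tail for $\sum_n I_n$. If you prefer to keep your excursion-theoretic picture, you must replace the single-long-sojourn step by such an iteration (or by an eigenvalue/Kac bound applied directly to occupation times) and prove the uniform-in-entry-value block estimate near $t_0$ rather than cite self-similarity.
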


We denote the corresponding probability
  measure on paths (modulo reparametrization)   by $\mu^\#(x_1, x_2; t_0)$. Assuming $x_1 < x_2$, we have the following:
  \begin{itemize}
 \item  $\gamma(0) = x_1$, $T = t_0$;
 \item  $\hcap[\gamma_t] = at, \; 0 \leq t \leq T$;
 \item  If $\kappa  \leq  4$, then $\gamma_T$ is a simple curve with
 $\gamma(0,t_0) \subset \Half$ and $\gamma(t_0) = x_2$. Moreover,
 $\p D_\infty \cap \Half = \gamma(0,t_0) $;
 \item  If $4 < \kappa < 8$, then 
 \[     \gamma(T) = x_+:= \max\{y \in \R: y \in \gamma_{t_0}\} > x_2,\]
 \[       x_-:=  \min\{y \in \R: y \in \gamma_{t_0} \} < x_1 . \] Indeed,  $\p D_\infty  \cap \Half$ is a curve
 connecting $x_-$ to $x_+$. 
 \end{itemize}  
 
To prove Theorem \ref{maint} it suffices to prove the following.

 \begin{theorem}  If $\kappa \leq 4$ then for every $t_0 >0$ and $x_1 < x_2$,
the measure $\mu^\#(x_1,x_2;t_0)$ is the same as $\mu^\#(x_2,x_1;t_0)$
if considered as probability measures on the conformal transformation $g = g_{t_0}$.
\end{theorem}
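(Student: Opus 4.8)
The plan is to follow the outline sketched in the introduction: fix $t_0$ and (by the scaling rule for $\Psi$) reduce to the case $t_0$ such that $a t_0$ equals any convenient fixed capacity, then interpolate between the two measures $\mu^\#(x_1,x_2;t_0)$ and $\mu^\#(x_2,x_1;t_0)$ through the family $\mu_r$, $r \in [0,1]$, described in the introduction. Concretely, $\mu_r$ is built by running $SLE$ from $x_1$ to $x_2$ of duration $t_0$ up to the time its capacity reaches $r a t_0$, producing an initial piece $\gamma^1$, and then, conditionally on $\gamma^1$, running $SLE$ from $x_2$ to the tip $\gamma^1(r t_0)$ inside $\Half \setminus \gamma^1$, conditioned to have total capacity $a t_0$, and finally reversing the second piece and concatenating. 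At $r=0$ this is $\mu^\#(x_2,x_1;t_0)$ (one runs the whole curve from $x_2$), and at $r=1$ it is $\mu^\#(x_1,x_2;t_0)$; so it suffices to show $r \mapsto \mu_r$ is constant as a measure on $g = g_{t_0}$.

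First I would set up the state space: push everything forward to a measure on continuous functions on a fixed closed ball $K = K_h \subset \Half$, where $h$ is chosen large enough that $\Im[g_\gamma(z)] \geq a t_0$ for all $z \in K$ whenever $\hcap[\gamma] = a t_0$; since $\gamma_{t_0}$ is then bounded (uniformly, using Proposition \ref{finalprop} to control $\max_t |U_t - x_1|$ and hence $\diam \gamma_{t_0}$ with Gaussian-type tails), the map $\gamma \mapsto g_{t_0}|_K$ is well defined and the $\mu_r$ are genuinely probability measures on $C(K)$. The core of the argument is a local commutation estimate: I would show that the Prokhorov distance satisfies $d_P(\mu_r, \mu_s) \leq c\,|s-r|^{1+\delta}$ for some $\delta > 0$. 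Granting this, a standard chaining argument forces $\mu_r$ to be independent of $r$: partition $[r,s]$ into $n$ equal subintervals, sum to get $d_P(\mu_r,\mu_s) \leq c\, n \cdot (|s-r|/n)^{1+\delta} \to 0$.

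For the estimate $d_P(\mu_r,\mu_{r+\epsilon}) = O(\epsilon^{1+\delta})$, the mechanism is the commutation relation expressed (as the introduction promises) in terms of Radon--Nikodym derivatives of two independent $SLE$ paths tilted by a Brownian-loop term. The point is that $\mu_{r+\epsilon}$ and $\mu_r$ can both be realized by: run $\gamma^1$ up to capacity $ra t_0$; then run a little extra piece $\delta\gamma$ of capacity $\epsilon a t_0$ emanating from $\gamma^1(r t_0)$ (toward $x_2$ for one, as part of $\gamma^2$ for the other), and a piece of $\gamma^2$ from $x_2$; the two constructions differ only in which of these two small pieces is grown first, i.e. by a conditioning/reversal of a capacity-$\epsilon$ stub. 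Using the explicit martingales $M_t$, $N_t$, $\tilde M_t$ from the excerpt, the two laws on the triple (initial piece, stub-near-tip, piece-from-$x_2$) have a Radon--Nikodym derivative against a common reference (independent $SLE$s weighted by a Brownian loop measure term between the two pieces) that differs from $1$ by $O(\epsilon)$ in an appropriate sense — more precisely, the loop-measure term contributes a factor $\exp\{O(\epsilon)\}$ once the two small pieces are shown to stay, with overwhelming probability, at definite distance from each other and from $K$, while the boundary-exponent/partition-function factors match exactly by the commutation algebra of $\Psi$. On a high-probability event (failure probability exponentially small in $\epsilon^{-c}$, via Proposition \ref{finalprop}) the total-variation distance between the two couplings is $O(\epsilon)$; combining the $O(\epsilon)$ bound on the good event with the $o(\epsilon^{1+\delta})$ contribution of the bad event gives $d_P(\mu_r,\mu_{r+\epsilon}) \leq c\,\epsilon^{1+\delta}$. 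The restriction $\kappa \leq 4$ enters because then $\gamma_{t_0}$ is a simple curve with $\p D_\infty \cap \Half = \gamma(0,t_0)$, so $g_{t_0}$ genuinely records the whole curve and the reversal $\tilde\gamma^2$ is an honest $SLE$ path to which the same formulas apply; for $\kappa > 4$ the loop term is less clean and one only controls the curve as seen from infinity, which is why that case is deferred.

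The main obstacle I expect is making the $O(\epsilon)$ bound on the Radon--Nikodym derivative between the two couplings rigorous and uniform: one must control the tip $\gamma^1(r t_0)$ and the conditioned-Bessel dynamics \eqref{conbessel} near the terminal time $t_0$ (where the drift $-X_t/(t_0-t)$ blows up), show the two small capacity-$\epsilon$ pieces and the far-from-$x_2$ piece do not come close to $K$ or to each other except on an exponentially small event, and verify that the Brownian-loop-measure interaction term between an $\epsilon$-capacity piece and the rest is genuinely $O(\epsilon)$ with the right uniformity in the configuration — this is where the bulk of the Loewner-chain estimates alluded to in the introduction will be needed. Everything else (the scaling reduction, the passage to $C(K)$, the chaining argument, the endpoint identifications $\mu_0,\mu_1$) is routine once this estimate is in hand.
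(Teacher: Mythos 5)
You reproduce the paper's architecture faithfully (the interpolating family $\mu_r$, the passage to measures on continuous functions on a fixed compact set, the bound $\rho(\mu_r,\mu_s)\le c|s-r|^{1+\delta}$, and the chaining argument forcing constancy), but the mechanism you propose for the key estimate has a genuine gap. You claim the two constructions of $\mu_r$ and $\mu_{r+\epsilon}$ have Radon--Nikodym derivatives against a common reference equal to $1+O(\epsilon)$, hence total-variation distance $O(\epsilon)$ on a good event, and that combining with the bad event yields a Prokhorov bound of order $\epsilon^{1+\delta}$. The arithmetic does not work: a total-variation bound of order $\epsilon$ gives a Prokhorov bound of order $\epsilon$ and nothing better, and a Lipschitz modulus is exactly what your own chaining step cannot use, since $n\cdot c\,(|s-r|/n)=c\,|s-r|$ does not tend to $0$. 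Moreover the $\exp\{O(\epsilon)\}$ claim for the loop term is unjustified: the commutation relation of Proposition \ref{commute} is an \emph{exact} identity $\Prob_1^*=\Prob_2^*$ between two orders of growth of two disjoint pieces; the loop and partition-function factors are literally the same expression for both orders and cancel, so there is no $\epsilon$-sized error to estimate there. For the pieces whose order actually differs between $\mu_r$ and $\mu_{r+\epsilon}$ (the capacity-$a\epsilon$ stub and the long piece grown from $x_2$), the two sets share an endpoint or lie within distance of order $\sqrt{\epsilon}$ of each other, so the measure of loops hitting both is not $O(\epsilon)$ at all, and ``definite distance from each other'' fails exactly where you need it. Worse, any attempt to show that the stub grown forward from the tip has law TV-close to the reversed stub grown from the other side is essentially the small-scale reversibility being proved, so that route risks circularity.

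The superlinear exponent in the paper comes from a different place. The exact commutation identity is used only to re-order the sampling so that $\gamma^1$ and the long piece from $x$ (of capacity $a(1-s)$) can be coupled to be \emph{identical} in the two constructions; the two samples then differ only in a stub of half-plane capacity $a\epsilon$ with the same initial and terminal points, whose diameter is at most $\epsilon^{1/2}\log^2(1/\epsilon)$ except on an event whose probability decays faster than any power of $\epsilon$ (this uses the conditioned-Bessel bound $|X_{1-\epsilon}|\le\sqrt{\epsilon}\log(1/\epsilon)$ together with $\diam[\gamma_t]\le c[\sqrt t+\max_s|U_s|]$). The Loewner expansion $|g_D(z)-z-h/z|\le c\,rh/|z|^2$, applied with $h=a\epsilon$ and $r\approx\sqrt{\epsilon}$ at points of the fixed compact set where $\Im$ is bounded below, gives $\|g-\tilde g\|\le c\,\epsilon$ deterministically and $\|g-\tilde g\|\le\epsilon^{5/4}$ off the bad event, whence $\rho(\mu_r,\mu_s)\le c(s-r)^{5/4}$. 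In other words, the gain comes from the geometry of the output space --- any two small hulls of the same capacity attached at nearly the same point induce nearly identical conformal maps far away --- and not from closeness of the laws of the two stubs, which the argument never needs to compare. Without this (or an equivalent) quantitative step, your proof does not close.
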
 

By scaling and translation invariance it suffices to prove this with $x_1 = 0,
x_2 = x > 0$ and $t_0 = 1$.

Fix $x>0$ and consider the measure $\mu_r =\mu_{r,x}$, $0 \leq r \leq 1$ obtained as follows: 
\begin{itemize}
    \item Grow the curve $\gamma$ under the measure $\mu^\#(0,x;1)$ until time $r$ giving curve $\gamma_r$ and corresponding map $g_r$.  Let $z_1 = g_r(\gamma(r)),
    w_1 = g_r( 1) $.
    \item Given $\gamma_r$, let $\tilde{\gamma}$  be $SLE$ from $x$ to $\gamma(r)$ in $\Half \setminus \gamma_r$ conditioned so that $\text{hcap}[\gamma_r \cup \tilde{\gamma}] = a$.  Equivalently, let $\eta$ be chosen from $\mu^\#(w_1,z_1;1-r)$ and let $\tilde \gamma = g_r^{-1} \circ \eta$. Let $h = g_\eta$ and
    $g = h \circ g_r$. 
\end{itemize}
Note that  $\mu_0 =\mu^\#(0,x;1),  \mu_1 =\mu^\#(x,0;1) $. 
We will prove the following stronger result, 
\begin{proposition}\label{aug2.prop2}
 If $\kappa \leq 4$ and $x > 0$, then for all
$0 \leq r \leq 1$, $\mu_r = \mu_0$.
\end{proposition}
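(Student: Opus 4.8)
The plan is to prove the quantitative estimate announced in the introduction: viewing $\mu_r$ as a law on $C(K)$ for the fixed closed ball $K=K_h$, the Prokhorov distance satisfies $d(\mu_r,\mu_s)\le c\,|s-r|^{1+\delta}$ for some $\delta>0$. Granting this, a soft argument finishes the proof: for $0\le r\le 1$ and any $n$, partitioning $[0,r]$ into $n$ equal pieces and applying the triangle inequality gives $d(\mu_0,\mu_r)\le n\cdot c\,(r/n)^{1+\delta}=c\,r^{1+\delta}\,n^{-\delta}\to 0$, so $\mu_r=\mu_0$ for every $r$; in particular $\mu_1=\mu_0$, which is the desired reversibility (Theorem~\ref{maint} after the reductions already made). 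Thus everything reduces to the Lipschitz-type bound on $[0,1]$.

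To compare $\mu_r$ and $\mu_s$ with $r<s$, I would condition on the initial segment $\gamma_r$ of capacity $ra$, which is common to both constructions, and on its mapping-out data. Conditionally, $\mu_s$ first grows $\gamma$ for the extra capacity $(s-r)a$ by the appropriately conditioned forward flow and then attaches a reversed $SLE$ from $x$ to the new tip in the remaining domain, conditioned to raise the total capacity to $a$; $\mu_r$ instead attaches directly the reversed $SLE$ from $x$ to $\gamma(r)$. Here the local commutation relation of Section~\ref{comm} enters: it identifies the joint law of a short forward increment together with the reverse chain -- read off from their mapping-out functions -- with the law of two \emph{independent} $SLE$-type chains tilted by a Brownian loop term, a description that is manifestly symmetric under interchanging the two marked points. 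Consequently, on the good event $G=G_{r,s}$ that the forward increment $\gamma_s\setminus\gamma_r$ stays at order-one distance from the rest of the configuration (so that no premature disconnection occurs and the Radon--Nikodym derivatives, together with the loop reweighting, are genuinely defined and bounded above and below), the two prescriptions produce exactly the same law for the final conformal map $g=g_\gamma$. Hence $\mu_r$ and $\mu_s$ agree when restricted to $G$, and $d(\mu_r,\mu_s)\le \mu_r(G^c)+\mu_s(G^c)$ in total variation.

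It then remains to bound $\mu_r(G^c)$ and $\mu_s(G^c)$ by $c\,|s-r|^{1+\delta}$, and this is where the Loewner--chain estimates are used. By Proposition~\ref{finalprop} and its relatives, the conditioned driving function of the forward increment over a capacity interval of length $(s-r)a$ has displacement exceeding a fixed $\varepsilon_0$ only with probability far smaller than any power of $|s-r|$ outside the regime in which the current tip data $X_r$ is itself atypically small, and that regime is controlled directly by the explicit density $\phi$ of \eqref{newyear}; similarly the reverse chain, run in domains differing only in a region of capacity $(s-r)a$, is shown to behave identically in the two constructions off an event of probability at most a power of $|s-r|$. Summing these contributions yields the claimed estimate.

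The main obstacle is the middle step: establishing the commutation identity precisely on $G$, and selecting $G$ so that simultaneously (i) the Brownian loop term is controlled on $G$ -- this is where $\kappa\le 4$, hence simple curves and a well-behaved loop weight, is essential -- and (ii) the complementary event $G^c$ still carries a power of $|s-r|$ strictly larger than one. Controlling the behavior of both chains near the common tip $\gamma(r)$, where the partition functions and Radon--Nikodym derivatives degenerate, is the delicate point; the rest is bookkeeping with the conditioned Bessel density $\psi_t(\cdot,\cdot;t_0)$ and the Loewner estimates.
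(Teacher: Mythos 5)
There is a genuine gap at the core of your argument. Your outer skeleton matches the paper: reduce to showing $\rho(\mu_r,\mu_s)\le c\,|s-r|^{1+\delta}$ and then use subdivision plus the triangle inequality to force constancy. But your middle step claims that, after conditioning on the common data and invoking the commutation relation of Section~\ref{comm}, ``the two prescriptions produce exactly the same law for the final conformal map $g$'' on a good event $G$, so that $\mu_r$ and $\mu_s$ agree in total variation up to $\mu_r(G^c)+\mu_s(G^c)$. The commutation relation does not give this. Proposition~\ref{commute} only says that growing two arcs from the two marked points in either order yields the same joint law; it lets you reorder the sampling so that $\gamma^1$ (capacity $ra$) and the long reversed piece (down to leftover capacity $a\epsilon$) are sampled identically in both constructions. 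What remains after that reordering is a piece of capacity $a\epsilon$ which, under $\mu_s$, is grown as $SLE$ from $y_2$ toward $x_2$ (Step 3a) and, under $\mu_r$, as $SLE$ from $x_2$ toward $y_2$ (Step 3b). Asserting that these two leftover pieces induce the same law of $g$, even on a good event, is precisely the reversibility statement at scale $\epsilon$ --- the thing being proved --- so your argument is circular at exactly the point you flag as ``the main obstacle.''

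The paper avoids this by never claiming equality of the two conditional laws. It couples Steps 1 and 2 completely, and then observes that the two differently-grown leftover pieces have the \emph{same} initial and terminal points, the \emph{same} half-plane capacity $a\epsilon$, and (off an event of probability decaying faster than any power of $\epsilon$, via the conditioned Bessel estimate $|X_{1-\epsilon}|\le \sqrt\epsilon\,\log(1/\epsilon)$) diameter $\lesssim \epsilon^{1/2}\log^2(1/\epsilon)$. The Loewner estimate $|g_K(z)-z-\hcap(K)/z|\le c\,r\,\hcap(K)/|z|^2$ then shows the two final maps $\psi,\tilde\psi$ satisfy $\|g-\tilde g\|\le c\,\epsilon$ always and $\le \epsilon^{5/4}$ with overwhelming probability on the fixed ball $\mathcal I$, where $\Im(h(z))$ is bounded below. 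This yields closeness --- not equality --- of $\mu_r$ and $\mu_s$, which is why the bound is stated in the Prokhorov metric rather than total variation. To repair your proof you would need to replace the claimed exact agreement on $G$ by this quantitative coupling estimate; as written, the step it rests on assumes local reversibility and the estimate $d(\mu_r,\mu_s)\le\mu_r(G^c)+\mu_s(G^c)$ has no justification.
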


 Since $\hcap[\gamma_1] = a$, we know that
$\gamma_1 \subset \{z: \Im(z)^2 \leq  {2a}\}$ and hence with probability one
for each $\mu_r$, $D_\infty \supset  \{z: \Im(z)^2>  {2a}\}$.
Let 
$\mathcal{I} = \{ z:  |z - ( \sqrt{8a} + 1)i| \leq 1\}$. 
Let $S$ denote the set of continuous functions from ${\mathcal I}$
to $\C$ endowed with the supremum norm $\|\cdot \|$.  
We also write $\rho$ for the corresponding Prokhorov metric on
probability measures on $S$.  Since the conformal map $g$ is determined
by its values on ${\mathcal I}$, it suffices to prove that for every
$\epsilon > 0$ and $0 \leq r < s \leq 1$,
 $\rho(\mu_r,\mu_s) < \epsilon$.   We will show the following.

\begin{proposition}  \label{aug2.prop1}
For every $K < \infty$, there exists $c,\delta$ such that  if $0 < x \leq K$
and   $0 \leq r \leq s \leq 1$, we can couple $(g,\tilde g)$
on the same probability space such that $g$ has distribution $\mu_r$, $\tilde g$ has distribution
$\mu_{s}$ and
\[           \Prob\{ \|g - \tilde g\| \geq  c \, (s-r)^{1+\delta } \} \leq c \, (s-r)^{\delta}, \]
\[          \Prob\{\|g - \tilde g\| \geq c \, (s-r) \} \leq c \, (s-r)^{1+\delta}.\]
\end{proposition}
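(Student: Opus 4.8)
The plan is to build the coupling of $\mu_r$ and $\mu_s$ in three stages, matching the two-step construction of $\mu_r$ (grow $\gamma^1$ to ``time'' $r$, then fill in $\gamma^2$ as a conditioned reverse $SLE$) stage by stage, and to control each stage using the local commutation relation together with the Loewner-chain and Bessel estimates recorded in the excerpt (notably Proposition~\ref{finalprop}).

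\emph{Stage 1: coupling the two initial pieces.} Under $\mu_r$ we grow $\gamma$ from $\mu^\#(0,x;1)$ up to capacity time $r$; under $\mu_s$ we grow it up to capacity time $s$. Couple them so that the driving functions agree on $[0,r]$ (both are the Bessel-type process \eqref{conbessel} with the same parameters, since $t_0 = 1$ in both); this is a genuine coupling because the law of the curve on $[0,r]$ is literally the same. Thus $\gamma^1_r$, hence $g_r$, $z_1 = g_r(\gamma(r))$, $w_1 = g_r(1)$, is common to both measures. What distinguishes $\mu_s$ is the extra growth on $[r,s]$: an additional curve $\beta$ of half-plane capacity $a(s-r)$ attached at $\gamma(r)$, which after applying $g_r$ is an $SLE$-type arc of capacity $a(s-r)$ started at $z_1$. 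By Proposition~\ref{finalprop} (scaled: the duration here is $s-r$), $\beta$ is contained in a region of diameter $O(\sqrt{s-r}\,\log(1/(s-r)))$ near $g_r(\gamma(r))$ with probability $1 - O((s-r)^{m})$ for any $m$; absorbing logs, it lies in a ball of radius $(s-r)^{1/2-\delta'}$ off an event of probability $(s-r)^{1+\delta}$. Since $\mathcal I$ is at macroscopic distance from the real line and from $\gamma_1$, the corresponding change in the mapping-out function — i.e. replacing $g_r$ by $g_{s}$ on $\mathcal I$ — is controlled by a Loewner/Schwarz-type estimate: a hull of capacity $O(s-r)$ sitting at bounded distance from $\mathcal I$ perturbs the mapping-out map on $\mathcal I$ by $O(s-r)$ in sup norm, and the image points $z_1 \mapsto g_s(\gamma(s))$ etc.\ move by $O((s-r)^{1/2-\delta'})$.

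\emph{Stage 2: coupling the ``fill-in'' pieces.} Now both measures must produce $\gamma^2$ as a conditioned reverse $SLE$: under $\mu_r$ it is $\eta \sim \mu^\#(w_1,z_1;1-r)$ run in $\Half$ and pulled back by $g_r^{-1}$; under $\mu_s$ it is $\eta' \sim \mu^\#(w_1',z_1';1-s)$ pulled back by $g_s^{-1}$, where $(z_1',w_1')$ are within $O((s-r)^{1/2-\delta'})$ of $(z_1,w_1)$ and the duration $1-s$ differs from $1-r$ by $s-r$. The heart of the argument is that these two conditioned reverse-$SLE$ measures, living in nearly the same domain with nearly the same endpoints and durations, can be coupled so that their mapping-out maps agree up to $O((s-r)^{1/2-\delta'})$ on $\mathcal I$ off an event of probability $O((s-r)^{\delta})$ (and up to $O(s-r)$ off an event of probability $O((s-r)^{1+\delta})$). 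I would do this via the commutation/Radon--Nikodym representation of Section~\ref{comm}: express both $\mu^\#(w_1,z_1;1-r)$ and $\mu^\#(w_1',z_1';1-s)$ as the law of a single underlying $SLE$ tilted by explicit martingale factors (the $M_t$, $N_t$, and the Brownian-loop term), couple the underlying $SLE$s by identifying their driving Brownian motions, and bound the total-variation/Prokhorov cost of the tilts by the discrepancy in the parameters. The parameter discrepancies are $O((s-r)^{1/2-\delta'})$, and the tilting densities are smooth in the parameters on the relevant bounded region (here one uses that $1-s$ is bounded away from $0$ only when $s<1$; the endpoint $s=1$, i.e. $r<s=1$, needs the separate observation that $\mu_1 = \mu^\#(x,0;1)$ and can be handled by the same estimates applied on $[0,r']$ for $r'<1$ and a limiting argument, or simply by noting $s-r\le 1$ forces $r$ away from $1$ only in the regime where the bound is vacuous otherwise).

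\emph{Stage 3: combining.} Composing: $g = h \circ g_r$ under $\mu_r$ versus $\tilde g = h' \circ g_s$ under $\mu_s$. On the good event we have $\|g_r - g_s\|_{\mathcal I} = O(s-r)$, the input data to Stage 2 agree up to $O((s-r)^{1/2-\delta'})$, and the coupled reverse-$SLE$ maps satisfy $\|h \circ g_r - h' \circ g_s\|_{\mathcal I} = O((s-r)^{1/2-\delta'})$ after pulling back through the (uniformly bi-Lipschitz on $\mathcal I$) maps $g_r^{-1}, g_s^{-1}$. Shrinking $\delta$ if necessary to absorb the various $\delta'$ losses and the exponent $1/2$ (note $(s-r)^{1/2-\delta'} \le c(s-r)^{1+\delta}$ is \emph{false}, so one actually keeps two separate bounds as in the statement: the ``large deviation'' bound $\Prob\{\|g-\tilde g\|\ge c(s-r)\}\le c(s-r)^{1+\delta}$ comes from the rare events where $\beta$ or $\eta$ is anomalously large, while the ``typical'' bound with exponent $1+\delta$ on the right and $(s-r)^{1+\delta}$ displacement... — here I would instead track: off probability $c(s-r)^\delta$ the displacement is $\le c(s-r)^{1/2+\delta''} \le c(s-r)^{1+\delta}$ once $\delta$ is chosen small relative to $\delta''$, wait, that needs $1/2+\delta'' \ge 1+\delta$, impossible). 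So more carefully: the genuine source of the $(s-r)^{1+\delta}$ displacement is capacity, which is $O(s-r)$, and the $(s-r)^{1/2}$ factors appear only in the \emph{size of the event}, not the displacement — one shows that \emph{conditioned} on the hull $\beta$ (resp.\ the perturbation of $\eta$) being of diameter $O((s-r)^{1/2})$, which happens with probability $1-O((s-r)^\delta)$, the mapping-out perturbation on the far set $\mathcal I$ is $O(\mathrm{capacity}) = O(s-r)$; the two displayed inequalities then read off directly. The main obstacle is Stage 2: making precise and quantitative the claim that the conditioned reverse-$SLE$ measures depend continuously (Hölder-$(1/2-)$) in Prokhorov distance on their endpoints and time durations, uniformly down to where the durations degenerate; this is exactly where the explicit Radon--Nikodym derivative computation of Section~\ref{comm} and the Bessel density bound \eqref{newyear}/Proposition~\ref{finalprop} must be combined, and where most of the technical work lies.
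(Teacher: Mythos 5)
There is a genuine gap, and it is exactly at the point you flag as ``where most of the technical work lies.'' The paper's proof does not couple the two macroscopic fill-in curves at all. It uses the commutation relation (Proposition \ref{commute}) to \emph{re-order the sampling of $\mu_s$} so that both the initial piece $\gamma^1$ (capacity $ar$) \emph{and} the macroscopic reverse piece (capacity $a(1-s)$, grown from $x$ toward the tip) are sampled in the same way, and hence can be taken literally identical, under both $\mu_r$ and $\mu_s$. After that, the two configurations differ only in a small middle hull of the \emph{same} half-plane capacity $a\epsilon$, $\epsilon = s-r$, attached at the \emph{same} pair of endpoints $x_2,y_2$ (which are within $\epsilon^{1/2}\log(1/\epsilon)$ of each other off a super-polynomially small event); sampled in one order for $\mu_s$ and in the reversed order for $\mu_r$. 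Because the two small hulls have equal capacity, the leading term $h/z$ in the expansion of the mapping-out map cancels, and the cited estimate \cite[Proposition 3.46]{lawler-book} bounds the discrepancy on $h(\mathcal I)$ by (diameter)$\times$(capacity)$/|z|^2$, giving $\|g-\tilde g\|\le c\epsilon$ deterministically and $\le \epsilon^{5/4}$ off a super-polynomially small event. Your Stage~2 replaces this with a coupling of the two \emph{macroscopic} conditioned curves $\eta\sim\mu^\#(w_1,z_1;1-r)$ and $\eta'\sim\mu^\#(w_1',z_1';1-s)$, whose target points (the tips $U_r$ versus $U_s$) and hence initial conditions of the driving SDE differ by order $\epsilon^{1/2}$ typically. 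If you identify the driving Brownian motions (or use any Prokhorov-type argument keyed to the parameter discrepancy), the two driving functions stay order $\epsilon^{1/2}$ apart over a time interval of order one, and a driving-function discrepancy of size $\delta$ over macroscopic time produces a mapping-out discrepancy of the same order $\delta$ at $\mathcal I$ --- there is no capacity cancellation, since the two curves are genuinely different macroscopic hulls rather than equal-capacity perturbations of a common base. So your coupling gives typical displacement of order $\epsilon^{1/2}$, which violates both displayed inequalities (the first needs displacement $\epsilon^{1+\delta}$ off probability $\epsilon^{\delta}$, the second displacement $c\epsilon$ off probability $\epsilon^{1+\delta}$).

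You in fact notice this tension at the end of Stage~3, and the patch you propose --- ``the $(s-r)^{1/2}$ factors appear only in the size of the event, not the displacement; the displacement is governed by capacity'' --- is precisely the mechanism of the paper's proof, but it is only available once the configurations being compared agree except for two small hulls of identical capacity grown from a common base map $h$. In your arrangement the $(s-r)^{1/2}$ enters the displacement itself, through the mismatched tips and driving functions of the two macroscopic fill-in curves, so the patch does not apply. The missing idea, concretely, is to invoke Proposition \ref{commute} not merely to compare Radon--Nikodym densities, but to justify sampling $\mu_s$ in the order $\gamma^1,\gamma^2,\eta,\eta'$ (big piece from $0$, then big reverse piece from $x$ stopped at capacity $a(1-s)$, then the two halves of the middle piece), so that Steps 1--2 can be completely coupled with those of $\mu_r$ and only the $a\epsilon$-capacity middle piece differs; a quantitative H\"older-in-parameters stability theory for conditioned reverse $SLE$, which your Stage~2 would require, is both much harder and, at exponent $1/2$, insufficient for the stated bounds.
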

We state it this way in preparation for later work in the $4 < \kappa < 8$ case.  For
$0 < \kappa \leq 4$, we do significantly better by giving a coupling that satisfies
$\|g - \tilde g\| \leq c \, (s-r)$ for all $(g,\tilde g)$ and such that
$\Prob\{\|g -\tilde g\| \geq (s-r)^{5/4}\}$ decays faster than every power of $s-r$.

Note that Proposition \ref{aug2.prop1}  implies that there exist $c,\delta$
\[  \rho(\mu_r, \mu_s) \leq c \, (s-r)^{1+ \delta}. \]
This shows that $\mu_r$ is H\"older continuous of order $1+\delta$ in $r$ and
a standard argument shows that this means that $\mu_r$ is a constant function
of $r$ and hence Proposition \ref{aug2.prop2} holds. 
%
%
%
%
%

\section{Local commutation relation}
\label{comm}

In this section we will state the basic ``commutation'' relation that we will
use.  In order to state the relation precisely we will set up some
notation.  Although we only use it for $\kappa \leq 4$ in this paper, we will
also give a result that holds for all $4 < \kappa < 8$.
 We fix $x_1 \neq x_2$ and $t_0 > 0$. Suppose $\gamma:[0,t_0]
\rightarrow \Half$
is a non-crossing curve parametrized by capacity from $x_1$ to $x_2$
in $\Half$. Let $\hat \gamma^R$ denote the reversed curve
from $x_2$ to $x_1$ defined by $\hat \gamma^R(t) = \gamma(t_0-t), \; 0 \leq t \leq t_0$.  Although  $\hat \gamma^R$ is not parametrized
by capacity, we can reparametrize it $\gamma^R(t) = \hat \gamma^R
(\sigma(t))$ so that for each $t$, $\hcap[\gamma^R_t] = at$. The total
time duration of $\gamma^R$ is the same as that of $\gamma$, $  t_0$.

If $0 < s_1 < s_2 < t_0$, we can write
\[          \gamma = \gamma_{s_1} \oplus \gamma[s_1, s_2]
 \oplus \gamma[ s_2,t_0], \]
 Let us write $\gamma^1$ for $\gamma_{s_1}$ and $\gamma^2$ for the reversal
 of $\gamma[s_2,t_0 ]$, so that we have
\begin{equation}  \label{mar3.1}
          \gamma = \gamma^1 \oplus \eta \oplus (\gamma^2)^R.
          \end{equation}
 Let us view this at the moment as a decomposition modulo reparametrization
 but still remember that $\hcap[\gamma] = at_0$ and we assume that
 \[ \hcap[\gamma^1 \cup (\gamma^2)^R] = \hcap[\gamma^1 \cup \gamma^2] <
     at_0 .\]
We will also assume that
\[           \gamma^1 \cap \gamma^2 = \eset . \]
If $\kappa \leq 4$, this will happen with probability one since $SLE_\kappa$
is supported on simple curves, but for $\kappa > 4$ this is a nontrivial
constraint.

Suppose  $r_1 + r_2 \leq t_0$, $V_1,V_2$ fixed subsets of $\C$,  and $\tau_1,\tau_2$ are stopping times for $\gamma^1,\gamma^2$  of
the form
\[    \tau_j  = \min\{s: \hcap[\gamma_s^j] = a\, r_j \mbox{ or } \gamma^j_s \not \in V_j\}. \]
We view probability measures on curves from $x_1$ to $x_2$
of half-plane capacity $at_0$ as probability measures on ordered pairs
\[   \bgamma =  (\gamma^1,\gamma^2):= (\gamma^1_{\tau_1},\gamma^2_{\tau_2}) . \]
 Here $\gamma^1,\gamma^2$ are parametrized by capacity, that is,
 $\hcap[\gamma^j_s] = as$.  Note that if $\gamma^1,\gamma^2$ are nontrivial, then
 \[    \hcap[\gamma^1 \cup  \gamma^2 ]
   < \hcap[\gamma^1] + \hcap[\gamma^2 ]
   \leq a(r_1+r_2) = t_0, \]
  and hence the $\eta$ in \eqref{mar3.1} is nontrivial. We will also
  assume that the stopping time is such that with probability one,
  $\gamma^1 \cap \gamma^2 = \eset$.  If $\kappa \leq 4$, t
  since $\eta$ is not trivial.  For $\kappa > 4$, we will guarantee it by
  choosing stopping times such that $\gamma^1 \subset V_1, \gamma^2 \subset
  V_2$ for some deterministic $V_1,V_2$ with $V_1 \cap V_2 = \eset$. 
  \begin{figure}
    \centering
    \includegraphics[scale=0.5]{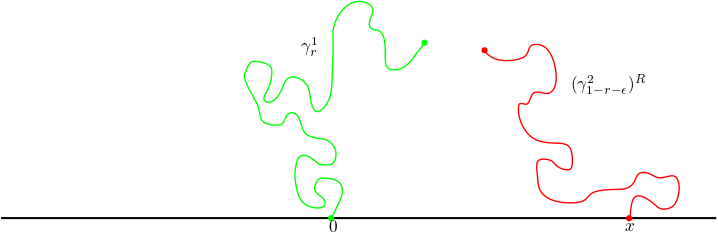}
    \caption{We grow $SLE$ from $0$ to $x$ until we reach hcap $ar$ (note that, by our choice of parametrization, this corresponds to time $r$). We then start $SLE$ from $x$ to $\gamma^1(r)$ stopped before its hcap reaches $a(1-r)$. The difference in the construction of the measures $\Prob_1^*$ and $\Prob_2^2$ comes in the middle piece, which we may construct in two ways.} 
    \label{fig1}
\end{figure}
  We now let ${\mathbb \Prob}_j^*$ be the probability measure on $\bgamma$
  given by
  \begin{itemize}
  \item Choose $\gamma^j$ from $SLE_\kappa$ from $x_j$ to $x_{3-j}$, conditioned
  to have total capacity $at_0$, stopped at time $\tau_j$.  Let $z_j=
  \gamma^j(\tau_j)$.
  \item Given $\gamma^j$, choose $\gamma^{3-j}$ from $SLE_\kappa$ from $x_{3-j}$
  to $z_{j}$ in $\Half \setminus \gamma^{j}$, conditioned to that the total
  capacity of the union of the curve and $\gamma^j$ is $at_0$, stopped
  at time $\tau_{3-j}$.
  \end{itemize}
  The commutation result is that ${\mathbb P}_1^* = {\mathbb P}_2^*$.
  We sketch the proof by giving the Radon-Nikodym derivative of each
  of the measures with respect to $\Prob$, the measure obtained from independent
  $SLE_\kappa$ paths.   To state this we give some notation. 
  Let  $D^j =
 \Half \setminus \gamma^j ,   D = \Half \setminus
 \bgamma $.  Let $g^j,g$ be the corresponding conformal maps; let
 $z_j = \gamma^j(\tau_j), U^j = g^j(z_j)$ and define
 $h_2,h_1$ by $h_2 \circ g^1 = g = h_1 \circ g_2$.

 \begin{figure}[ht]
     \centering
     \includegraphics[scale=0.4]{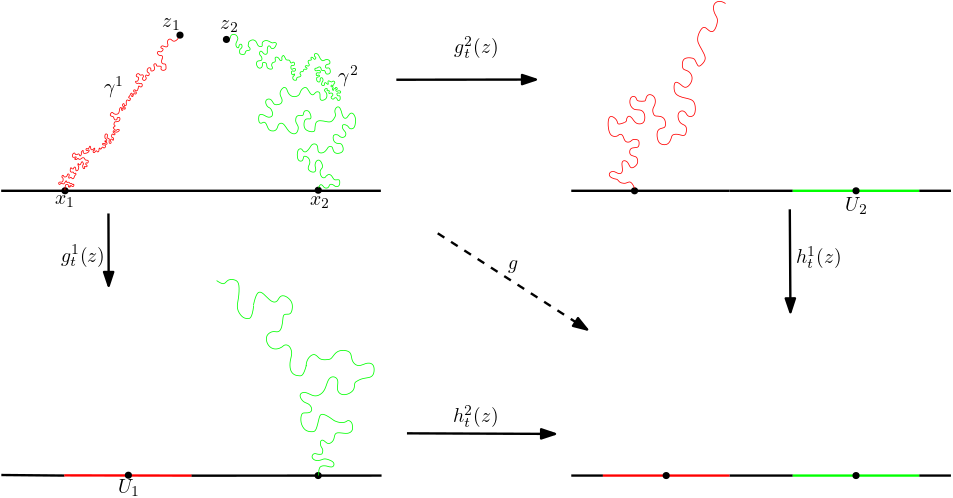}
     \caption{The maps $g^1, g^2, h^1$ and $h^2$ exhibit a commutative relation.}
     \label{fig2}
 \end{figure}
 
 \begin{proposition} \label{commute}
  The  Radon-Nikodym derivative of $\Prob_j^*$ with respect
 to $\Prob$, the measure obtained from independent $SLE$ paths from $0$
 to infinity stopped at times $\tau_1,\tau_2$,  is given by
   \[  \frac{d  \Prob_j^*}{d\Prob} (\bgamma) = 
     {h_1'(U^2) ^b \, h_2'(U^1)^b  } \,  \exp\left\{\frac \cent 2 \, m_\Half(\gamma^1,\gamma^2)
\right\}
\, \frac{|g(z_2) - g(z_1)|^{2b}}
       {|x_2-x_1|^{2b}}\, \frac{\phi( |U^2 - U^1 |,t_0 - \tau_1 + \tau_2)}{\phi(|x_2-x_1|,1)}. \]
Here $b = (6-\kappa)/2\kappa$ is the boundary scaling exponent, $\cent = (6-\kappa)(3\kappa - 8)/2\kappa$ is the central charge,
and  $m_\Half(\gamma^1,\gamma^2)$ denotes the        
       Brownian loop measure of loops in $\Half$ that intersect both $\gamma^1$ and $\gamma^2$
   and $\phi$ is as in \eqref{newyear}.   In particular, $\Prob_1^* =
   \Prob_2^*$.
   \end{proposition}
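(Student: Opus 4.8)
The plan is to realise each measure $\Prob_j^*$ explicitly as a Girsanov tilt of the independent law $\Prob$ and then to observe that the resulting Radon--Nikodym derivative is a symmetric function of the two labels, so that it equals both $d\Prob_1^*/d\Prob$ and $d\Prob_2^*/d\Prob$. It suffices to treat $j=1$: the construction of $\Prob_2^*$ is obtained from that of $\Prob_1^*$ by the relabelling $1\leftrightarrow 2$, $\gamma^1\leftrightarrow\gamma^2$, $g^1\leftrightarrow g^2$, $h_1\leftrightarrow h_2$, $z_1\leftrightarrow z_2$, $\tau_1\leftrightarrow\tau_2$, and under this relabelling every factor of the asserted formula is invariant: $m_\Half(\gamma^1,\gamma^2)$, $|g(z_2)-g(z_1)|$, $|U^2-U^1|$, $|x_2-x_1|$ are symmetric, $h_1'(U^2)^b h_2'(U^1)^b$ is symmetric, and the second argument of $\phi$ is $t_0-a^{-1}\hcap[\gamma^1\cup\gamma^2]$ (the residual capacity budget for the middle piece), hence symmetric. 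So once the formula is established for $j=1$ it holds for $j=2$ as well. Throughout, the density carries an implicit factor $\mathbbm{1}\{\gamma^1\cap\gamma^2=\eset\}$ (and, when $\kappa>4$, $\mathbbm{1}\{\gamma^1\subset V_1,\ \gamma^2\subset V_2\}$), off of which $\Prob_j^*$ vanishes and on which $m_\Half(\gamma^1,\gamma^2)<\infty$.

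Because $\Prob_1^*$ is built in two stages and $\Prob$ is a product measure, $d\Prob_1^*/d\Prob$ factors as $W_1(\gamma^1)\,W_2(\gamma^1,\gamma^2)$, with $W_1$ the density of the $\gamma^1$-marginal and $W_2$ the conditional density of $\gamma^2$ given $\gamma^1$. For $W_1$: by the discussion preceding the Definition, the law of $\gamma^1$ — that of $SLE$ from $x_1$ to $x_2$ of time duration $t_0$, stopped at $\tau_1$ — has density $\tilde M^1_{\tau_1}/\tilde M^1_0$ with respect to $SLE$ from $x_1$ to $\infty$ stopped at $\tau_1$, where $\tilde M^1_t=M^1_tN^1_t$ combines the partition-function martingale and the Bessel $h$-transform; written out with $b=(3a-1)/2$ this gives the boundary factor $(g^1_{\tau_1})'(x_2)^b$, a power of $X^1_{\tau_1}=|g^1_{\tau_1}(x_2)-U^1|$, and the ratio $\phi(X^1_{\tau_1},t_0-\tau_1)/\phi(|x_2-x_1|,t_0)$.

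The conditional factor $W_2$ is the crux. Given $\gamma^1$, the curve $\gamma^2$ under $\Prob_1^*$ is $SLE$ from $x_2$ to $z_1$ in $D^1=\Half\setminus\gamma^1$ conditioned so that $\hcap[\gamma^1\cup\gamma^2]$ — which by additivity of half-plane capacity under composition equals $a\tau_1+\hcap[g^1(\gamma^2)]$ — eventually reaches $at_0$; equivalently, its image $g^1(\gamma^2)$ is $SLE$ from $g^1(x_2)$ to $U^1$ in $\Half$ of time duration $t_0-\tau_1$, stopped at the time corresponding to $\tau_2$. I would obtain $W_2$ by composing three explicit tilts of the reference curve $SLE(x_2\to\infty;\Half)$, each verified, as in the derivation of $M_t$ and $\tilde M_t$, by It\^o's formula against the Loewner equation for $\gamma^2$: (i) passing from $SLE$ in $\Half$ to $SLE$ in the slit domain $D^1$ with the same endpoints, which by the conformal restriction covariance of $SLE$ (in its dynamic, curve-stopped form) introduces precisely $\exp\{\tfrac{\cent}{2}\,m_\Half(\gamma^1,\gamma^2)\}$ together with boundary-derivative factors of $g^1$ — this step uses $\gamma^1\cap\gamma^2=\eset$ and is where the central charge enters; (ii) moving the target from $\infty$ to $z_1$, a tilt by the partition-function ratio $\Psi_{D^1}(x_2,z_1)/\Psi_{D^1}(x_2,\infty)$, interpreted, since $z_1$ is not a locally analytic boundary point, through the scaling rule exactly as for $M_t$, which, using $h_2\circ g^1=g=h_1\circ g^2$ and $g(z_i)=h_{3-i}(U^i)$, becomes explicit in terms of $h_1'(U^2)$, $h_2'(U^1)$ and $|g(z_2)-g(z_1)|$; and (iii) imposing the time-duration conditioning on $g^1(\gamma^2)$, a further Bessel $h$-transform contributing the remaining $\phi$-ratio.

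Finally I would multiply $W_1$ and $W_2$ and simplify, using $2b=3a-1$ and the commutative-diagram identities: the surplus powers of $X^1_{\tau_1}$ and the extra $(g^1)'$-factors recombine, the intermediate $\phi(X^1_{\tau_1},t_0-\tau_1)$ cancels between the two stages, and what remains is exactly the stated product; by the symmetry noted above this is also $d\Prob_2^*/d\Prob$, so $\Prob_1^*=\Prob_2^*$. The main obstacle is step (i): the dynamical restriction computation, i.e. verifying that for $\gamma^1$ fixed and $\gamma^2$ grown by capacity in $\Half$, a process of the form $(\text{boundary derivative of the map removing }g^2_t(\gamma^1))^b\cdot\exp\{\tfrac{\cent}{2}\,m_\Half(\gamma^1,\gamma^2_t)\}$ is, suitably normalised, a $\Prob$-local martingale — this rests on the known formula for the time-derivative of the Brownian loop measure of loops meeting a growing hull, and it is the only point at which the distinction between $\kappa\le 4$ (automatic disjointness of $\gamma^1,\gamma^2$) and $4<\kappa<8$ (disjointness enforced via $V_1,V_2$) intervenes.
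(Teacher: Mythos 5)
Your proposal follows essentially the same route as the paper's (sketched) proof: factor $d\Prob_1^*/d\Prob$ over the two growth stages, with the second stage decomposed into a restriction tilt (the Brownian-loop/central-charge term plus boundary-derivative factors), a change of target via the partition-function scaling rule (producing $h_1'(U^2)^b\,h_2'(U^1)^b$ and the $|g(z_2)-g(z_1)|^{2b}$ factor), and a Bessel $h$-transform for the duration conditioning (the $\phi$-ratio), after which symmetry of the expression yields $\Prob_1^*=\Prob_2^*$. The only difference is bookkeeping: you condition each stage and cancel the intermediate $\phi(X^1_{\tau_1},t_0-\tau_1)$, while the paper computes the unconditioned two-stage density and imposes the total-duration conditioning once at the end --- the same cancellation.
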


  \begin{proof} Without loss of generality, we assume $t_0 = 1$. We will
  prove the result for $j=1$.

\begin{itemize}

\item  We start by choosing $  \gamma^1 $ using $SLE_\kappa$
from $x_1$ to $x_2$ stopped at time $\tau_1$.  Here we are not conditioning on the
total time duration of the path.  The Radon-Nikodym derivative of this with
respect to $SLE$ from $0$ to infinity, restricted to the event that the total
time duration is greater than $\tau_1$ is 
\[       g_1'(x_2)^{b}
  \, \frac{|g_{1}(x_2) - U_t^1|^{2b}}{|x_2-x_1|^{2b}}.\]
  Let $\eta_2 = g_1\circ \gamma^2 $.

\item  Given $  \gamma^1 $, we will choose
$ \gamma^2 $ using $SLE$ from $x_2$ to $z_1$
in the domain $D_1$.  We will do this in two
steps.

\item  We first choose  $  \gamma^2$  using $SLE$ from $x_2$ to  infinity
in  $D_1$.  Using the basic martingale of the restriction property
this gives Radon-Nikodym derivative
\[              \exp\left\{\frac \cent 2 \, m_D(\gamma^1,\gamma^2)
\right\}  \, \frac{h_1'(U^2) ^b }{g_1'(x_2)^{b}}.\]
Note that $\eta_2 := g_1 \circ \gamma_2$ is an $SLE$ from $g_1(x_2)$
to infinity.

\item We now tilt again so that  $\eta_2 := g_1 \circ \gamma_2$ is an $SLE$ from $g_1(x_2)$
to  $U^1$.  This gives a Radon-Nikodym derivative
\[    {h_2'(U^1)}^b \, \frac{|h_2(\eta_2(\tau_2)) - h_2(U_1)|^{2b}}
       {|g_1(x_2) - U^1|^{2b}}
        =  {h_2'(U^1)}^b \, \frac{|g(z_2) - g(z_1)|^{2b}}
       {|g_1(x_2) - U^1|^{2b}}. \]
       
\item Multiplying the last two gives
\[  \exp\left\{\frac \cent 2 \, m_D(\gamma^1,\gamma^2)
\right\}  \, \frac{h_1'(U^2) ^b \, h_2'(U^1)^b  }{g_1'(x_2)^{b}}
\, \frac{|g(z_2) - g(z_1)|^{2b}}
       {|g_1(x_2) - U^1|^{2b}}. \]
       
 \item  We thus have that the Radon-Nikodym derivative restricted to the event that
 the total time duration is greater than $\tau_1 + \tau_2$ is given by:
 \[      {h_1'(U^2) ^b \, h_2'(U^1)^b  } \,  \exp\left\{\frac \cent 2 \, m_D(\gamma^1,\gamma^2)
\right\}
\, \frac{|g(z_2) - g(z_1)|^{2b}}
       {|x_2-x_1|^{2b}}. \]
       
  \item If we now condition so that the total time duration is one we get
  
   \[      {h_1'(U^2) ^b \, h_2'(U^1)^b  } \,  \exp\left\{\frac \cent 2 \, m_D(\gamma^1,\gamma^2)
\right\}
\, \frac{|g(z_2) - g(z_1)|^{2b}}
       {|x_2-x_1|^{2b}}\;  \frac{\phi( |U^2 - U^1 |,1 - (\tau_1+\tau_2))}{\phi(|x_2-x_1|,1)}. \]
       \end{itemize}

\end{proof}

\begin{figure}
    \centering
    \includegraphics[scale=0.3]{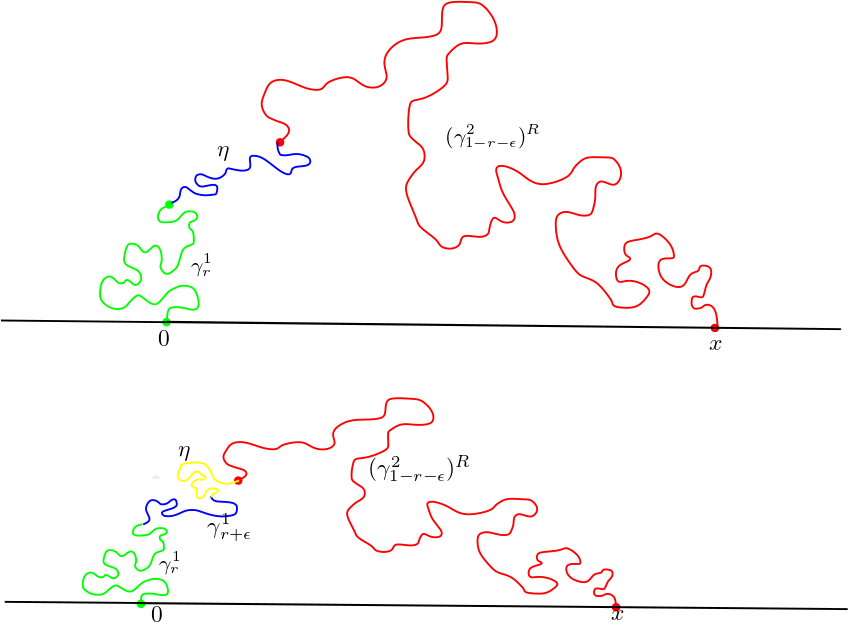}
    \caption{The difference in the construction comes in the curves $\eta $ and $\tilde{\eta}$. Given this, we may sample from $\mu_r$ and $\mu_{r + \epsilon}$ respectively, allowing us to conclude using basic facts about the Loewner equation.}
    \label{fig3}
\end{figure}

\section{Proof of main Theorem}
\label{main}

We will use some basic facts about the Loewner equation.

\begin{proposition} \cite[Proposition 3.46]{lawler-book} There exists $c < \infty$ such that
if   $D  = \Half \setminus K$ is a simply connected domain with
$r = \sup\{|z|: z \in K\}  $ and $h  = \hcap(K)  $,  then the  corresponding conformal
map $g: D \to \Half $ satisfies for $|z| \geq 2 r$, 
\[     \left|g_D(z) - z - \frac{h}{z}\right| \leq \frac{c\,rh}{|z|^2}.
  \]
 In particular, if $K,\tilde K$ are two such hulls with $h = \tilde h$, then 
  for   $|z| \geq 2 (r   \wedge \tilde r),$
  \[   \left|g (z) -\tilde  g (z)\right| \leq \frac{c \, (r + \tilde r) \,h}
   {|z|^2} . \]
\end{proposition}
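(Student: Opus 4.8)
The plan is as follows. The second assertion follows from the first together with the elementary bound $|g_K(z)-z|\le c\sqrt{\hcap(K)}$: writing $g(z)-\tilde g(z)=\big(g(z)-z-\tfrac hz\big)-\big(\tilde g(z)-z-\tfrac hz\big)$, applying the main estimate to each of the two hulls when $|z|$ exceeds twice its radius and the crude bound on the complementary range, gives $|g(z)-\tilde g(z)|\le c(r+\tilde r)h/|z|^2$. So I concentrate on the first estimate. Write $D=\Half\setminus K$, let $g_K\colon D\to\Half$ be the mapping-out function, $h=\hcap(K)$, and set $\varphi(z)=z-g_K(z)$, so the goal is $|\varphi(z)+h/z|\le c\,rh/|z|^2$ for $|z|\ge2r$. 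I will produce a Herglotz integral representation for $\varphi$ valid on the exterior region $\{|z|>r\}$ and read the estimate off it.

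\emph{Positivity, reflection, and the representation.} Since $g_K$ maps $D$ into $\Half$ and sends $\p D$ into $\R$, the function $u=\Im\varphi=\Im z-\Im g_K(z)$ is bounded and harmonic on $D$, equals $\Im z\ge0$ on $\p K$ and $0$ on $\R\setminus\overline K$, and tends to $0$ at $\infty$ since $g_K(z)=z+o(1)$; by the minimum principle $u\ge0$ on $D$, so $\Im\varphi\ge0$ on $\{|z|>r\}\cap\Half$. As $\varphi$ is real on $\R\cap\{|z|>r\}$, Schwarz reflection extends $\varphi$ analytically to $\{|z|>r\}$ with real Laurent coefficients, and the normalization $g_K(z)=z+h/z+O(|z|^{-2})$ gives $z\varphi(z)\to-h$. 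Compose now with the Joukowski map $\zeta=\tfrac12(z/r+r/z)$, which carries $\{|z|>r\}\cap\Half$ conformally onto $\Half$, sending $\{|z|=r\}\cap\Half$ onto $(-1,1)$, the rays $(r,\infty)$ and $(-\infty,-r)$ onto $(1,\infty)$ and $(-\infty,-1)$, and $\infty$ to $\infty$. The transported function $F(\zeta)=\varphi(z(\zeta))$ is analytic on $\Half$ with $\Im F\ge0$, continues analytically and is real across $(1,\infty)\cup(-\infty,-1)$, and (since $z(\zeta)\sim 2r\zeta$) satisfies $F(\zeta)\to0$ and $\zeta F(\zeta)\to-h/(2r)$. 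The Herglotz representation of a function with nonnegative imaginary part, combined with the continuation across the real rays (which confines the representing measure to $[-1,1]$, hence makes it finite) and the vanishing at $\infty$ (which removes the linear and constant terms), gives
\[
F(\zeta)=\int_{[-1,1]}\frac{d\nu(t)}{t-\zeta},\qquad \nu\ge0,\quad \nu([-1,1])=\frac{h}{2r},
\]
the total mass coming from $\zeta F(\zeta)\to-\nu([-1,1])$.

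\emph{Extracting the bound.} Using $1/\zeta=2rz/(z^2+r^2)$, the monopole term is $-\nu([-1,1])/\zeta=-hz/(z^2+r^2)=-h/z+hr^2/\big(z(z^2+r^2)\big)$, which for $|z|\ge2r$ differs from $-h/z$ by at most $\tfrac23 hr/|z|^2$. For the remainder,
\[
\varphi(z)+\frac{hz}{z^2+r^2}=\int_{[-1,1]}\Big(\frac{1}{t-\zeta}+\frac1\zeta\Big)\,d\nu(t)=\int_{[-1,1]}\frac{t\,d\nu(t)}{\zeta(t-\zeta)},
\]
so its modulus is at most $\nu([-1,1])$ divided by $|\zeta|\,\dist(\zeta,[-1,1])$. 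For $|z|=Rr$ with $R\ge2$, the point $\zeta$ lies on the Joukowski ellipse with foci $\pm1$, so $|\zeta|\ge\tfrac12(R-R^{-1})$ and $\dist(\zeta,[-1,1])\ge\tfrac12(R+R^{-1})-1$; both quantities are $\gtrsim R$, so the remainder is $\lesssim h/(rR^2)=hr/|z|^2$. Adding the two pieces yields $|g_K(z)-z-h/z|=|\varphi(z)+h/z|\le c\,rh/|z|^2$.

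\emph{Where the work lies.} The two delicate points are: the bookkeeping behind the representation — justifying that the Herglotz measure is finite, supported in $[-1,1]$, with total mass exactly $h/(2r)$, the support statement being the part that genuinely uses the analytic continuation of $F$ across the real rays — and the geometric estimate for $|\zeta|\,\dist(\zeta,[-1,1])$, which must be carried out without losing a factor since the desired exponent forces the threshold $|z|\ge2r$. This last estimate is saved near the real axis exactly because the Joukowski image of $\{|z|=r\}$ is the segment $[-1,1]$ sitting at the common foci of the level ellipses, so those ellipses keep a definite distance from it.
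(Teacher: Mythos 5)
Your proof of the main estimate is correct, and it is a close cousin of the standard argument rather than something radically new. The paper simply cites \cite{lawler-book}; the proof there exploits the same positivity, but applied to the inverse map: since $\Im g_K^{-1}(z)\ge \Im z$, the function $g_K^{-1}(z)-z$ is itself a Herglotz function on $\Half$, real off a compact interval of length $O(r)$, hence equal to $\int d\mu(t)/(t-z)$ with $\mu\ge 0$ of total mass $h$; substituting $z=g_K(w)$ and using $|g_K(w)-w|\le 3r$ gives the bound with no change of variables. Your route does the same thing on the forward map, at the cost of the Joukowski map and the ellipse geometry; I checked those details and they close: the Nevanlinna measure is confined to $[-1,1]$ by the reflection, its mass is $h/(2r)$ from $\zeta F(\zeta)\to -h/(2r)$, and for $|z|=Rr$, $R\ge 2$, one indeed has $|\zeta|\ge \tfrac12(R-R^{-1})\ge 3R/8$ and $\dist(\zeta,[-1,1])\ge\tfrac12(R+R^{-1})-1=(R-1)^2/(2R)\ge R/8$, so the remainder is $\le \tfrac{64}{3}\,rh/|z|^2$. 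One small presentational point: ``$u=\Im z$ on $\p K$'' is glib, since $g_K$ need not extend continuously to a rough hull boundary; the clean justification of $\Im g_K(z)\le \Im z$ is the harmonic-measure identity $\Im z-\Im g_K(z)=\E^z[\Im B_\tau]\ge 0$, after which your reflection and everything downstream is fine.

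The second assertion is where you have a genuine gap. The ``elementary bound'' $|g_K(z)-z|\le c\sqrt{\hcap(K)}$ that you invoke is false in general: take the fjord hull $K=[0,iH]\cup[iH,\,L+iH]$ with $H\ll L$. Then $\hcap(K)\asymp LH$, but every prime end on the channel floor is mapped, up to errors exponentially small in $L/H$, to within $O(H\log(L/H))$ of $L$; so points of the domain just above the floor near $x=1$ are displaced by roughly $L\gg\sqrt{LH}$. (The correct elementary bound is $|g_K(z)-z|\le 3\,\rad(K)$.) Moreover, no crude bound can rescue the intermediate range $2(r\wedge\tilde r)\le|z|<2(r\vee\tilde r)$, because the second statement is false as literally written with $r\wedge\tilde r$: let $K$ be the half-disk of radius $\rho$ at $0$ and $\tilde K$ the vertical slit of height $2\rho$ at $1$, so $h=\tilde h=\rho^2$, $r=\rho$, $\tilde r\approx 1$; for $z$ slightly above the slit tip one has $|g(z)-\tilde g(z)|\asymp\rho$, while $c(r+\tilde r)h/|z|^2\asymp\rho^2$. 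The ``$\wedge$'' should be ``$\vee$'' (equivalently, both hulls contained in the same disk of radius $r$, which is the form in which the estimate is stated and used), and in that form the second claim is an immediate triangle-inequality consequence of the first, with no auxiliary bound needed; that is the deduction you should give.
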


\begin{proposition} \cite[Proposition 4.13]{lawler-book} There exists $c < \infty$, such that
if  $U_t$ is a driving function with $U_0 = 0$
and $\gamma_t$ is the corresponding curve, then 
\[   \diam[\gamma_t] \leq c \, \left[\sqrt t + \max_{0 \leq s \leq t} |U_s|\right].\]
\end{proposition}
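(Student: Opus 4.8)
The plan is to reduce everything to a single scale-invariant capacity estimate plus bookkeeping. The estimate is: for every hull $K$ with mapping-out function $g_K:\Half\setminus K\to\Half$,
\[ \sup_{w\in\overline\Half}\bigl|g_K^{-1}(w)-w\bigr|\;\le\;\sqrt{\hcap(K)}. \]
Granting this, the proposition is essentially immediate. By hypothesis $U$ generates the curve $\gamma$, so $\gamma(s)=\lim_{\epsilon\downarrow0}g_s^{-1}(U_s+i\epsilon)$, where $g_s$ is the mapping-out function of $\gamma_s$ and $\hcap(\gamma_s)=as$; applying the estimate with $w=U_s+i\epsilon$ and sending $\epsilon\downarrow0$ gives $|\gamma(s)-U_s|\le\sqrt{as}$. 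Hence for every $s\le t$,
\[ |\gamma(s)|\;\le\;|U_s|+\sqrt{as}\;\le\;\max_{0\le u\le t}|U_u|+\sqrt a\,\sqrt t, \]
and since $\gamma(0)=U_0=0$ lies on the trace, $\diam[\gamma_t]\le 2\sup_{0\le s\le t}|\gamma(s)|\le 2(\sqrt a\,\sqrt t+\max_{0\le s\le t}|U_s|)$, that is, the claim with $c=2(\sqrt a+1)$.

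To prove the capacity estimate I would use the integral representation of $g_K$. First, $\Im g_K(z)\le\Im z$ on $\Half\setminus K$: the function $\Im z-\Im g_K(z)$ is harmonic, bounded, tends to $0$ at $\infty$, and is nonnegative on $\partial(\Half\setminus K)$ (it vanishes on $\R\setminus\overline K$ and is positive on $\partial K\cap\Half$), hence nonnegative by the minimum principle. Consequently $g_K(z)-z$ has nonpositive imaginary part on $\Half\setminus K$ and, being real on $\R\setminus\overline K$, extends by Schwarz reflection to an analytic function on $\C\setminus(\overline K\cap\R)$ vanishing at $\infty$; therefore $g_K(z)-z=\int\frac{\mu(dx)}{z-x}$ for a finite positive measure $\mu$ supported on $\overline K\cap\R$, with $\mu(\R)=\hcap(K)$ by matching the expansion $g_K(z)=z+\hcap(K)/z+O(|z|^{-2})$. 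Cauchy--Schwarz then gives, for $z\in\Half\setminus K$,
\[ |g_K(z)-z|^2\;\le\;\mu(\R)\int\frac{\mu(dx)}{|z-x|^2}\;=\;\hcap(K)\,\frac{\Im z-\Im g_K(z)}{\Im z}\;\le\;\hcap(K), \]
and writing $z=g_K^{-1}(w)$ for $w\in\Half$ yields $|g_K^{-1}(w)-w|\le\sqrt{\hcap(K)}$, which extends to $\overline\Half$ by continuity.

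I do not expect a real obstacle: the only substantive step is the Cauchy--Schwarz bound, and everything else is unwinding the definition of the trace; the estimate is in any case standard (it is essentially what underlies Proposition~4.13 of \cite{lawler-book}). The one point worth flagging is that the proposition presupposes that $U$ generates a curve, which is precisely what makes $\gamma(s)=\lim_{\epsilon\downarrow0}g_s^{-1}(U_s+i\epsilon)$ meaningful. If instead one wants the stronger statement for the full hull $\gamma_t$, namely $\rad(\gamma_t)\le\sqrt a\,\sqrt t+\max_{0\le s\le t}|U_s|$, the same estimate applies: any point of the hull is swallowed at some time $s\le t$, at which moment $g_s$ sends it (via the relevant boundary limit) to $U_s$, so its distance to $U_s$ is again at most $\sqrt{as}$, and one concludes as before.
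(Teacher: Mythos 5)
Your reduction to the single estimate $\sup_{w\in\overline\Half}|g_K^{-1}(w)-w|\le\sqrt{\hcap(K)}$ does not work: that estimate is false, and the Herglotz argument you give for it is flawed. The function $g_K(z)-z$ is analytic only on $\Half\setminus K$; Schwarz reflection across $\R\setminus\overline K$ extends it to $\C\setminus\bigl(K\cup K^*\cup(\overline K\cap\R)\bigr)$, where $K^*$ is the mirror image of $K$, not to $\C\setminus(\overline K\cap\R)$, so there is no Cauchy-transform representation with a positive measure supported on $\overline K\cap\R$ (for the slit $K=[0,iy_0]$, $g_K(z)=\sqrt{z^2+y_0^2}$ and $g_K(z)-z$ is certainly not of the form $m/z$). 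The representation that is valid is for the inverse map, $g_K^{-1}(w)-w=\int \nu(dx)/(x-w)$ with $\nu\ge 0$, $\nu(\R)=\hcap(K)$, supported on the image interval; but then your Cauchy--Schwarz step produces the factor $\bigl(\Im g_K^{-1}(w)-\Im w\bigr)/\Im w$, which is unbounded as $\Im w\to 0$, not $\le 1$. Moreover the estimate cannot be saved by inserting a constant: take the hull generated by the polygonal curve from $0$ to $iy_0$ to $1+iy_0$ with $y_0$ small. Then $\hcap(K)\asymp y_0$, but the harmonic measure from infinity of the entire ``fjord'' boundary under the canopy is $O(y_0)$ and its image sits next to the image of the fjord mouth near $1$; hence real points near $0$ at the bottom of the fjord are mapped by $g_K$ to within $O(y_0)$ of $1$, so $\sup_w|g_K^{-1}(w)-w|\asymp 1\gg\sqrt{\hcap(K)}$. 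For the same reason your intermediate claim $|\gamma(s)-U_s|\le\sqrt{as}$, and the ``swallowing time'' argument at the end, are false: continue that curve back under its own canopy to real part $1/2$; then $t\asymp y_0$ and $U_t\approx 1$, while $\gamma(t)\approx \tfrac12+\tfrac{i}{2}y_0$, so $|\gamma(t)-U_t|\asymp 1\gg\sqrt{at}$. Only the vertical extent of a hull is controlled by $\sqrt{\hcap}$; the horizontal extent is exactly what the $\max_{0\le s\le t}|U_s|$ term is there for.

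The statement (which the paper only cites; the proof is in Lawler's book) is proved directly from the Loewner equation \eqref{loewner}, with no inverse-map estimate. Let $R=\max_{0\le s\le t}|U_s|$, let $z\in\Half$ with $|z|>R+2\sqrt{at}$, set $\delta=(|z|-R)/2>\sqrt{at}$, and let $\sigma$ be the first time $|g_s(z)-U_s|\le\delta$. For $s\le \sigma\wedge t$,
\[ |g_s(z)-z|\;\le\;\int_0^s\frac{a\,du}{|g_u(z)-U_u|}\;\le\;\frac{at}{\delta}\;<\;\delta, \qquad\text{so}\qquad |g_s(z)-U_s|\;\ge\;|z|-R-\frac{at}{\delta}\;>\;\delta, \]
which forces $\sigma>t$, i.e.\ $T_z>t$ and $z\notin\gamma_t$ (indeed $z$ is not in the hull). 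Hence $\gamma_t\subset\{|z|\le R+2\sqrt{at}\}$ and $\diam[\gamma_t]\le 2R+4\sqrt{at}\le c\bigl[\sqrt t+\max_{0\le s\le t}|U_s|\bigr]$. Your outer bookkeeping (bounding $|\gamma(s)|$ and doubling, using $\gamma(0)=0$) is fine; it is the single capacity estimate at the heart of your proposal that must be replaced by this ODE argument.
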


We also need some easy estimates about our Bessel process conditioned to reach
the origin at a given time.

 \begin{lemma}  If $K < \infty$, there exists $\epsilon_0 > 0$ such that
 if  $X_t$ satisfies \eqref{conbessel} with $t_0 = 1$ and $|x_2 - x_1| \leq K $,
then as $\epsilon \rightarrow 0$, 
\[                 \Prob\left\{|X_{1-\epsilon}| \geq   \sqrt{\epsilon}  \, \log(1/\epsilon) 
    \right\}  \]
    decays faster than every power of $\epsilon$.
\end{lemma}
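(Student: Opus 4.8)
The plan is to read off the density of $X_{1-\epsilon}$ from the explicit formula $\psi_t(x,y;t_0) = q_t(x,y)\,\phi(y,t_0-t)/\phi(x,t_0)$ recorded before the definition of the conditioned Bessel process, and then bound the relevant tail integral directly. Since $t_0 = 1$ and $1-\epsilon < T = 1$, the process is strictly positive at time $1-\epsilon$, so $|X_{1-\epsilon}| = X_{1-\epsilon}$; writing $x_0 = x_2 - x_1 \in (0,K]$, the density of $X_{1-\epsilon}$ at $y>0$ is $\psi_{1-\epsilon}(x_0,y;1) = q_{1-\epsilon}(x_0,y)\,\phi(y,\epsilon)/\phi(x_0,1)$. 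First I would insert the displayed expressions for $q_s$ and $\phi$ and simplify: the powers of $x_0$ cancel, and using $\exp\{-x_0^2/(2(1-\epsilon)) + x_0^2/2\} \le 1$ and $(1-\epsilon)^{-(2a+1/2)} \le 2^{2a+1/2}$ for $\epsilon \le \tfrac12$, one is left with
\[ \psi_{1-\epsilon}(x_0,y;1) \;\le\; C\, y^{4a}\, \epsilon^{-2a-\frac12}\, \exp\!\left\{-\frac{y^2}{2\epsilon}\right\}\, h\!\left(\frac{x_0 y}{1-\epsilon}\right). \]
The only property of $h$ that is needed is that it grows at most exponentially, $h(u) \le C e^{u}$ on $[0,\infty)$ (this is the modified-Bessel-type estimate of the kind proved in Section \ref{finalsec}); combined with $x_0 \le K$ and $\epsilon \le \tfrac12$ it yields
\[ \psi_{1-\epsilon}(x_0,y;1) \;\le\; C_K\, y^{4a}\, \epsilon^{-2a-\frac12}\, \exp\!\left\{-\frac{y^2}{2\epsilon}+2Ky\right\}, \qquad 0 < \epsilon \le \tfrac12, \]
uniformly in $x_0 \in (0,K]$.

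Next I would integrate over $y \ge \lambda := \sqrt{\epsilon}\,\log(1/\epsilon)$ and substitute $y = \sqrt{\epsilon}\,u$; the prefactor $\epsilon^{-2a-1/2}$ is cancelled exactly by the Jacobian together with $y^{4a} = \epsilon^{2a}u^{4a}$, and the Gaussian term becomes $e^{-u^2/2}$, so
\[ \Prob\{X_{1-\epsilon}\ge\lambda\} \;=\; \int_{\lambda}^{\infty}\psi_{1-\epsilon}(x_0,y;1)\,dy \;\le\; C_K\int_{\log(1/\epsilon)}^{\infty} u^{4a}\,\exp\!\left\{-\frac{u^2}{2}+2K\sqrt{\epsilon}\,u\right\} du. \]
Once $\epsilon$ is small enough, depending only on $K$, that $\log(1/\epsilon) \ge 8K\sqrt{\epsilon}$, the linear term obeys $2K\sqrt{\epsilon}\,u \le u^2/4$ throughout the range of integration, so the integrand is dominated by $u^{4a}e^{-u^2/4} \le C_a e^{-u^2/8}$, and therefore
\[ \Prob\{X_{1-\epsilon}\ge\lambda\} \;\le\; C_K'\, e^{-(\log(1/\epsilon))^2/8} \;=\; C_K'\,\epsilon^{\,\frac18\log(1/\epsilon)}. \]
Since $\tfrac18\log(1/\epsilon)\to\infty$ as $\epsilon\to0$, the right side is below $\epsilon^{N}$ as soon as $\epsilon \le e^{-8N}$, which gives the stated decay with a threshold $\epsilon_0$ depending only on $K$ (and on the exponent one wants to beat).

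Essentially everything here is routine Gaussian bookkeeping; the one genuine input is the exponential-type bound on $h$, together with the cancellation of the powers of $x_0$ — it is this cancellation that makes the estimate uniform over all $|x_2 - x_1|\le K$, and it is consistent with the fact that the behaviour of a Bessel bridge near the endpoint to which it is conditioned does not depend on where the bridge started. If one wished to avoid the closed form of the density, an equivalent route is to time-reverse: $(X_{1-s})_{0\le s\le 1}$ is a Bessel-type bridge started at $0$, and the claim becomes a small-time upper-tail bound for such a bridge, which again reduces to a Gaussian computation; but the density argument above is the most direct given what is already available.
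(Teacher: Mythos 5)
Your proof is correct in substance and follows the same route as the paper's own (two-line) argument, which likewise reduces the lemma to an inspection of the transition density of the conditioned process; the only structural difference is that the paper first invokes a monotone coupling in the starting point to reduce to the extremal case $x_2-x_1=K$, whereas you obtain uniformity over $x_0=x_2-x_1\in(0,K]$ directly from the cancellation of the $x_0$-powers in $\psi_{1-\epsilon}(x_0,y;1)$ — if anything a cleaner route, since the lemma imposes no lower bound on $|x_2-x_1|$. The substitution $y=\sqrt{\epsilon}\,u$ and the resulting bound $C_K'\,\epsilon^{\log(1/\epsilon)/8}$ give exactly the super-polynomial decay required.

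Two caveats, both fixable. First, the cancellation you assert does not come from the formula for $q_s$ as displayed in the paper: as printed, $q_s(x,y)$ carries $x^{-(4a+1)}$, which together with $\phi(x_0,1)^{-1}\propto x_0^{-(4a-1)}$ yields a factor $x_0^{-8a}$, not $1$, and would ruin uniformity as $x_0\to 0$. The printed formula is evidently a typo — it is inconsistent with the density \eqref{newyear} of $T$, e.g.\ one must have $\int_0^\infty q_s(x,y)\,\phi(y,t_0-s)\,dy$ proportional to $\phi(x,t_0)$ — and the standard density of the process \eqref{jul1.1} killed at the origin has $x^{4a-1}$ in the numerator, with which your simplified bound and the exact cancellation are correct; you should say explicitly that you are using this corrected form rather than the displayed one. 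Second, the growth bound $h(u)\le C e^{u}$ is genuinely needed and is not contained in the paper, which only records that $h$ is entire with $h(0)>0$ (and Section \ref{finalsec} proves nothing about $h$, contrary to your parenthetical); it is true because $h$ is, up to a constant, $z^{-(4a-1)/2}I_{(4a-1)/2}(z)$ for the modified Bessel function $I_{(4a-1)/2}$, and that identification (or any bound on $h$ growing more slowly than a Gaussian) should be stated, since entirety plus $h(0)>0$ alone would not exclude growth like $e^{z^2}$ that would defeat the factor $e^{-y^2/(2\epsilon)}$.
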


\begin{proof} By a coupling argument, the probability on the left restricted
to $|x_2-x_1| \leq K$ is maximized when $x_2 - x_1 = K$.  In this case, we
can look at the transition probability.
\end{proof}

We write $\epsilon =s-r$.  We decompose a simple path $\gamma$ from $0$ to $x$ with $\hcap[\gamma]
= a$ as 
\[           \gamma = \gamma^1 \oplus \eta \oplus  (\eta')^R \oplus (\gamma^2)^R ,\]
where the decomposition is defined by 
\[  \hcap[\gamma^1] = ra,  \;\;\;\;\; \hcap[\gamma^1 \cup \eta] = sa, \;\;\;\;
\;\;\;\;  \hcap[ \gamma^1 \cup   \gamma^2] = (t_0-\epsilon)a.\]
Using the definition and the conformal Markov property, we can see
that when we  sampling from $\mu_s$ we choose the paths in order $\gamma^1,\eta,\gamma^2,\eta'$.  When we
sample from $\mu_r$ we use the order $\gamma^1,\gamma^2,\eta',\eta^R$.  In
each case the distribution is $SLE$ to the endpoint of the other curve in
the domain slit by the curves at that point, conditioned to have  the
appropriate total
half-plane capacity and stopped as specified above.

We now use  Proposition \ref{commute} to say
that another way  to sample from $\mu_s$ is to   choose the
paths in order $\gamma^1,\gamma^2, \eta, \eta'$.   Hence we can write the sampling as
follows. Steps 1 and 2 are the same for both sampling methods.
 Step 3a is used for $\mu_s$ and Step 3b is used for $\mu_r$. 

\begin{itemize}
\item {\bf Step 1:}  Choose $\gamma^1$ from $SLE$ from $0$ to $x_0$ conditioned to have
total half-plane capacity $a$ stopped at time $r$, that is, stopped when $\hcap[\gamma^1]
 = ar$.  Let $z_1 = \gamma(r)$, let
  $\hat g : \Half \setminus \gamma^1 \rightarrow \Half$ be the corresponding transformation, and let $y_1 = \hat g(z_1), x_1 = \hat g(x_0)$.
 
\item {\bf Step 2:} Choose $\eta$ from $SLE$ from $x_1$ to $y_1$ conditioned to have
total half-plane capacity $a(1-r)$ stopped at time $1-s$, that is, stopped when
$\hcap[\eta] = a(1-s)$. Let $h: \Half \setminus \eta \rightarrow \Half$ be the corresponding
transformation, and let $y_2 = h(y_1), x_2 = h(\eta(1-s))$.  Let $\gamma^2 =
\hat g^{-1} \circ \eta$ and $w_1 = g^{-1}(\eta(1-s))$.
  Let $\hat h = h \circ g$ and note that $\hat h: \Half \setminus
(\gamma^1 \cup \gamma^2) \rightarrow \Half$ is the corresponding conformal transformation
which satisfies $\hat h(z_1) = y_2, \hat h(w_1) = x_2$.

\item {\bf Step 3a:} Choose $\omega^1$ from $SLE$ from $y_2$ to $x_2$ conditioned to have
total half-plane capacity $a\epsilon$ stopped at the first time that
\[                     \hcap [h^{-1} \circ \omega^1] = a \epsilon . \]
This is the same as the first time that
\[                    \hcap[\gamma^1 \cup \hat h^{-1} \circ \omega^1] = as.\]
 Let this time be $u $ and let $\phi: \Half \setminus \omega^1 \rightarrow \Half$
 be the corresponding transformation with $y_3=\phi(\omega^1_u), x_3 = \phi(x_2)$.  Let
 $\tilde \omega^2$ be chosen from $SLE$ from $x_3$ to $y_3$ conditioned to have
 half-plane capacity $a(\epsilon - u)$ giving conformal map $\hat \phi$ and let
 $\omega^2 = \hat \phi^{-1} \circ \tilde \omega^2$ and
 \[                   \omega = \omega^1 \oplus [\omega^2]^R . \]
 Let $\psi: \Half \setminus \omega \rightarrow \Half$ be the corresponding conformal
 transformation.
 \begin{figure}
     \centering
     \includegraphics[scale=0.5]{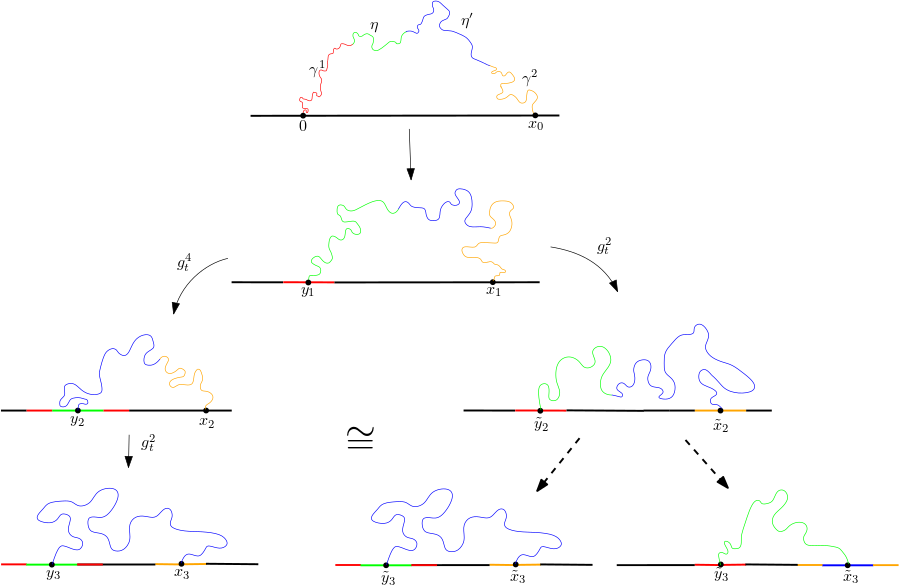}
     \caption{A schematic showing the full picture, though not drawn to scale (in particular, the yellow and blue segments ought not to have comparable lengths). The dotted arrows on the right correspond to the commutation relation, and together with some Loewner estimates, we may conclude that the laws of the measures obtained, regardless of the path one chooses in the schematic, are the same.}
     \label{fig:my_label}
 \end{figure}
 \item {\bf Step 3b} Choose $\omega^*$ from $SLE$ from $x_2$ to $y_2$ conditioned to
 have total half-plane capacity $a\epsilon$ and set
 \[                    \hat \omega = [\omega^*]^R.\]
 Let $\hat \psi: \Half \setminus \hat \omega \rightarrow \Half$ be the corresponding
 conformal transformation.

 \end{itemize}

In our coupling we use the complete coupling for steps 1 and 2. Hence we write
\[    g = \psi \circ h, \;\;\;\;\; \tilde g = \tilde \psi \circ h , \]
where $h$ is the same in both cases.  If $z \in {\mathcal I}$, then 
$\Im(h(z)) \geq \sqrt{4a}$.    Except for an event
of probability that decays faster than every power of $\epsilon$, we have
$x_2 - y_2 \leq \epsilon^{1/2} \, \log (1/\epsilon)$.  Using this, we see that in step 3a
and in step 3b we get a curve with the same initial and terminal points, of half
plane capacity $a \epsilon$ and such that, except for an event of probability that decays faster than
every power of $\epsilon$, has diameter bounded by $\epsilon^{1/2} \, \log^2 \epsilon $.
Let $\psi, \tilde \psi$ be the conformal transformations.  Then if $\Im(z) \geq \sqrt a$
we have
\[                |\psi(z) - \tilde \psi(z)| \leq c \, \epsilon , \]
and, except for an event of probability that decays faster than
every power of $\epsilon$,
\[                    |\psi(z) - \tilde \psi(z)| \leq   \epsilon^{5/4}.\]
Therefore, in this coupling, with probability one $\|g - \tilde g \| \leq c \, \epsilon$
and
\[               \Prob\{\|g - \tilde g\| \geq \epsilon^{5/4}\}    \leq c \, \epsilon^3.\]

\section{Proof of Lemma \ref{finalprop}}  \label{finalsec}

  We fix
$a > 1/4$ and allow constants to depend on $a$.  
 We assume
that $X_t$ satisfies \eqref{conbessel}.  For ease we will
assume $x >0$ but the proof with $x < 0$ is essentially
the same.

The proof  follows from the easy estimate
\[   -X_t \leq U_t \leq  \int_0^t \frac{a}{X_s} \, ds \]
 and the following
two lemmas that handle the two sides of the inequality.  For the lower
bound, we get a somewhat sharper estimate. 

\begin{lemma}
There exists $c < \infty$ such that if
 $X_t$ satisfies \eqref{conbessel} with   $X_0 = x_0 \, \sqrt{t_0} >0,$
 then  for all $r > 0$, 
\[   \Prob\left\{\max_{0 \leq t \leq t_0} (X_t/\sqrt{t_0}) \geq x_0 + r
    \right\} \leq  c \, \exp\left\{-\frac{r^2}{4} \right\}.\]
\end{lemma}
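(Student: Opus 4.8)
The plan is to strip off the duration $t_0$ by Brownian scaling, and then bound $X_t$ from above by an \emph{unconditioned} Bessel process, whose running maximum on $[0,1]$ is easy to control; the conditioning only \emph{helps}, since it contributes a negative drift.

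First I would reduce to $t_0=1$. If $X$ solves \eqref{conbessel} with duration $t_0$ and $X_0=x_0\sqrt{t_0}$, then $Y_s:=X_{st_0}/\sqrt{t_0}$ solves \eqref{conbessel} with duration $1$ and $Y_0=x_0$, and $\max_{0\le t\le t_0}X_t/\sqrt{t_0}=\max_{0\le s\le 1}Y_s$. So it suffices to prove: if $X$ solves \eqref{conbessel} with $t_0=1$ and $X_0=x_0>0$, then $\Prob\{\max_{0\le t\le 1}X_t\ge x_0+r\}\le c\,e^{-r^2/4}$. Now let $\bar X$ solve $d\bar X_t=\frac{2a}{\bar X_t}\,dt+dW_t$ driven by the \emph{same} $W$, with $\bar X_0=x_0$; since $a>1/4$, $\bar X$ is a Bessel process of dimension $\delta=4a+1>2$, hence strictly positive for all time and never returning to $0$. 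The drift of $X$ is $\frac{2a}{X_t}-\frac{X_t}{1-t}\le\frac{2a}{X_t}$ because the conditioning term is nonpositive, and the diffusion coefficients agree (both $\equiv 1$); so the standard comparison theorem for one-dimensional diffusions gives $X_t\le\bar X_t$ for all $t<1$. To cope with the singularities at $X=0$ and $t=1$ one localizes at $\inf\{t:X_t\wedge\bar X_t\le\epsilon\}\wedge(1-\eta)$, on which both drifts are Lipschitz, applies the comparison there, and lets $\epsilon,\eta\downarrow 0$; this is legitimate since on $[0,1)$ both paths are a.s.\ continuous and strictly positive, so these localizing times increase to $1$. Thus $\max_{0\le t\le 1}X_t\le\max_{0\le t\le 1}\bar X_t$.

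It remains to bound $\max_{0\le t\le 1}\bar X_t$. The process $\bar X$ is a nonnegative submartingale (a martingale plus the nondecreasing process $\int_0^\cdot\frac{2a}{\bar X_s}\,ds$), so $e^{\theta\bar X_t}$ is a nonnegative submartingale for $\theta>0$ and Doob's maximal inequality gives
\[
\Prob\Big\{\max_{0\le t\le 1}\bar X_t\ge x_0+r\Big\}\ \le\ e^{-\theta(x_0+r)}\,\E\!\left[e^{\theta\bar X_1}\right].
\]
The crucial input is an estimate on $\E[e^{\theta\bar X_1}]$ in which $x_0$ enters only \emph{linearly}, so that it cancels the shift. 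Since $\bar X_1^2$ is noncentral chi-squared with $\delta$ degrees of freedom and noncentrality $x_0^2$, standard facts about squared Bessel processes (additivity plus comparison in the dimension) show $\bar X_1$ is stochastically dominated by $|G|$ with $G\sim N(v_0,I_n)$, $n=\lceil\delta\rceil$, $|v_0|=x_0$. As $g\mapsto|g|$ is $1$-Lipschitz, Gaussian concentration yields $\E[e^{\theta\bar X_1}]\le\E[e^{\theta|G|}]\le\exp(\theta\,\E|G|+\theta^2/2)$ with $\E|G|\le\sqrt{n+x_0^2}\le\sqrt n+x_0$. Substituting, the $x_0$'s cancel:
\[
\Prob\Big\{\max_{0\le t\le 1}\bar X_t\ge x_0+r\Big\}\ \le\ \exp\!\big(-\theta(r-\sqrt n)+\tfrac{\theta^2}{2}\big),
\]
and $\theta=r-\sqrt n$ (for $r>\sqrt n$) gives $\exp(-(r-\sqrt n)^2/2)$. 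For $r$ larger than a constant depending only on $\delta$ (hence only on $\kappa$) this is at most $e^{-r^2/4}$; for smaller $r$ the claimed bound holds automatically by enlarging $c$, since the probability is at most $1$. Combining with the reductions above proves the lemma.

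The steps that need a little care, rather than a genuine obstacle, are two: pushing the comparison $X_t\le\bar X_t$ through the singular points $X=0$ and $t=1$ by the localization just described; and keeping the $x_0$-dependence of $\E[e^{\theta\bar X_1}]$ linear — a crude sub-exponential MGF bound on $\bar X_1^2$ would leave an uncancelled factor $e^{x_0^2/2}$, which is why the Lipschitz/Gaussian-concentration route is the one to use.
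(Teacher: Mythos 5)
Your proof is correct, but it takes a genuinely different route from the paper's. The paper exploits the $h$-transform structure already set up in Section \ref{prelim}: under $\Prob^*$ the process is an unconditioned Bessel process and the law solving \eqref{conbessel} is obtained by tilting by the martingale $N_t=\phi(X_t,t_0-t)$ of \eqref{jul2.3}; writing $\sigma$ for the hitting time of the level $y=x_0+r$, optional stopping gives $\Prob\{\sigma<1\}\leq \E^*[N_\sigma/N_0;\sigma<1]$, and the ratio $N_\sigma/N_0$ is bounded deterministically from the explicit form of $\phi$ (assuming $r^2\geq 1+4a$ so that $\sup_{t}(1-t)^{-\frac12-2a}e^{-y^2/(2(1-t))}$ is attained at $t=0$), which yields a bound of order $e^{-x_0r-r^2/2}$ and hence $c\,e^{-r^2/4}$. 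You instead discard the conditioning term by a pathwise comparison with the unconditioned Bessel process of dimension $4a+1>2$ driven by the same Brownian motion (the localization at $\{X\wedge\bar X\leq\epsilon\}\wedge(1-\eta)$ is the right way to handle the singular drift, and the needed strict positivity of the conditioned process before time $1$ does hold since $4a+1\geq 2$ and the extra drift is bounded away from $t=1$), and then control the running maximum of the Bessel process via Doob's inequality for $e^{\theta\bar X_t}$, BESQ additivity/stochastic domination by $|N(v_0,I_n)|$, and Gaussian concentration, being careful—as you note—to keep the $x_0$-dependence linear so it cancels the shift. What each approach buys: the paper's argument is very short because the martingale $N_t$ and the density $\phi$ are already in hand, and it retains the extra factor $e^{-x_0 r}$; yours never touches the conditioned density, is robust to any drift dominated by the Bessel drift, and gives the slightly stronger exponent $e^{-(r-\sqrt n)^2/2}$ for large $r$, at the cost of invoking the comparison theorem and concentration machinery. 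Both arguments handle small $r$ the same way, by absorbing it into the constant $c$.
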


\begin{proof} We may assume
that $r^2 \geq 1+4a$ and by scaling we may assume $t_0 = 1$.
  Let $y = 
  x_0 +r$  
   and let  $\sigma =  \inf\{t: X_t=  y\}.$   The equation \eqref{conbessel} can be obtained
by starting with $X_t$ satisfying \eqref{jul1.1} where $W_t$
is a $\Prob^*$ Brownian motion  and then tilting
by the martingale $N_t$ as in \eqref{aug1.1} to get the measure
$\Prob$.  
Hence, 
\[ \Prob \{\sigma  < 1 \}
  \leq M_0^{-1} \, \E^*\left[M_{\sigma_\epsilon}; \sigma_\epsilon < \infty\right]   .\]
 Note that
 \[   M_0 = x_0^{4a-1}  \, \exp\left\{-\frac{x_0^2}2\right\} ,\]
 and if $\sigma <1$, 
  \[  M_{\sigma} \leq   \max_{0 \leq t \leq 1}
                  y^{4a-1} \, (1-t)^{-\frac 12 - 2a} \, \exp
                   \left\{- \frac{y^2}{2(1-t)}\right\}
                    = y^{4a-1} \, e^{-y^2/2} .\]        
The equality uses $r^2 \geq 1+4a$.  Therefore,
\[ 
  \frac{M_\sigma}{M_0} \leq \left[1 + \frac {\log(1/\epsilon)}{x_0}
  \right]^{4a-1} \, \exp \left\{- x_0 \, r -
     \frac{r^2 }{2} \right\}  
      \leq c\, \exp\left\{-\frac{r^2 }{4} \right\}. 
\]
\end{proof}

\begin{lemma}
If $a > 1/4$, there exist $u > 0$ and $c < \infty$ such that for any
$x > 0$ and $t_0 >0$ if $X_t$ satisfies \eqref{conbessel},
then for all $r > 0$,
\[  
  \Prob^x\left\{
    \int_0^{t_0} \frac{ds}{X_s} \,  dy
    \geq  r\sqrt{t_0}  \right\}  \leq c\,  e^{-ur}. \]
 
\end{lemma}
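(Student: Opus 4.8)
Here is the route I would take. The plan is to reduce to $t_0 = 1$ by Brownian scaling, integrate \eqref{conbessel} to produce an exact identity for $\int_0^1 ds/X_s$, and then use a deterministic time change that turns the Bessel bridge into an ergodic diffusion whose marginals have Gaussian tails.

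\textbf{Step 1: scaling and an exact identity.} If $X$ solves \eqref{conbessel} with duration $t_0$ then $\widehat X_s:=X_{t_0 s}/\sqrt{t_0}$ solves \eqref{conbessel} with duration $1$ and $\int_0^{t_0} ds/X_s=\sqrt{t_0}\int_0^1 ds/\widehat X_s$, so it suffices to prove $\Prob^x\{\int_0^1 ds/X_s\ge r\}\le c\,e^{-ur}$ with $c,u$ independent of $x>0$. Integrating \eqref{conbessel} over $[0,1-\delta]$ and letting $\delta\downarrow 0$, using $X_1=0$ a.s.\ and $\int_0^1 X_s/(1-s)\,ds<\infty$ a.s.\ — both of which follow from the linear ODE $\frac{d}{ds}\E[X_s^2]=(4a+1)-\frac{2}{1-s}\E[X_s^2]$, whose solution $\E[X_s^2]=x^2(1-s)^2+(4a+1)s(1-s)\le (x^2+4a+1)(1-s)$ gives $\E[X_s]\le\sqrt{(1-s)(x^2+4a+1)}$ and hence $\E\int_0^1 X_s/(1-s)\,ds\le 2\sqrt{x^2+4a+1}<\infty$ — I obtain
\[
  \int_0^1 \frac{ds}{X_s}=\frac{1}{2a}\left(\int_0^1\frac{X_s}{1-s}\,ds-x-W_1\right).
\]
Since $\Prob\{-W_1\ge t\}\le e^{-t^2/2}$, the Brownian term is harmless, so the task reduces to the upper tail of $\int_0^1 X_s/(1-s)\,ds-x$.

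\textbf{Step 2: time change and recentering.} Put $s=1-e^{-2\tau}$ and $Y_\tau:=X_s/\sqrt{1-s}=e^{\tau}X_{1-e^{-2\tau}}$ for $\tau\in[0,\infty)$. A short It\^o computation with \eqref{conbessel} gives
\[
  dY_\tau=\Bigl(\frac{4a}{Y_\tau}-Y_\tau\Bigr)d\tau+\sqrt2\,dB_\tau,\qquad Y_0=x,
\]
for a standard Brownian motion $B$; this is an ergodic diffusion (a ``Bessel--Ornstein--Uhlenbeck'' process, with Gaussian-tailed invariant density), and $\int_0^1 X_s/(1-s)\,ds=2\int_0^\infty Y_\tau e^{-\tau}\,d\tau$. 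To remove the dependence on the starting point I recenter: with $\bar Y_\tau:=Y_\tau-xe^{-\tau}$ one has $\bar Y_0=0$, $d\bar Y_\tau=(4a/Y_\tau-\bar Y_\tau)\,d\tau+\sqrt2\,dB_\tau$, and the exact cancellation $2\int_0^\infty Y_\tau e^{-\tau}d\tau-x=2\int_0^\infty\bar Y_\tau e^{-\tau}d\tau$, so that altogether
\[
  \int_0^1\frac{ds}{X_s}=\frac1a\int_0^\infty\bar Y_\tau\,e^{-\tau}\,d\tau-\frac{W_1}{2a}.
\]
It remains to bound $\Prob\{\int_0^\infty\bar Y_\tau e^{-\tau}d\tau\ge r\}$ uniformly in $x\ge 0$.

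\textbf{Step 3: domination and exponential moments.} On $\{\bar Y_\tau\ge 0\}$ we have $Y_\tau=\bar Y_\tau+xe^{-\tau}\ge\bar Y_\tau$, hence $4a/Y_\tau\le 4a/\bar Y_\tau$, so the comparison theorem for one-dimensional SDEs yields $\bar Y_\tau\le\widehat Y_\tau$ for all $\tau$, where $\widehat Y$ solves $d\widehat Y_\tau=(4a/\widehat Y_\tau-\widehat Y_\tau)\,d\tau+\sqrt2\,dB_\tau$, $\widehat Y_0=0$, driven by the same $B$ and now free of $x$. For its generator $\mathcal L f=f''+(4a/y-y)f'$ and $V(y):=e^{\lambda y^2/2}$ with $0<\lambda<1$ one computes $\mathcal L V=\lambda\bigl(1+4a-(1-\lambda)y^2\bigr)V$, which satisfies a drift condition $\mathcal L V\le\beta-\alpha V$ for suitable $\alpha>0$, $\beta<\infty$; hence $\sup_{\tau\ge0}\E[e^{\lambda\widehat Y_\tau^2/2}]\le 1+\beta/\alpha<\infty$, and so $\sup_{\tau\ge0}\E[e^{\mu\widehat Y_\tau}]<\infty$ for every $\mu>0$ (Young's inequality). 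Since $e^{-\tau}\,d\tau$ is a probability measure on $[0,\infty)$, Jensen gives $\E\bigl[\exp\!\bigl(\mu\int_0^\infty\widehat Y_\tau e^{-\tau}d\tau\bigr)\bigr]\le\int_0^\infty\E[e^{\mu\widehat Y_\tau}]e^{-\tau}d\tau<\infty$, and Markov's inequality yields $\Prob\{\int_0^\infty\widehat Y_\tau e^{-\tau}d\tau\ge r\}\le c\,e^{-ur}$; combining with $\bar Y\le\widehat Y$ and the Gaussian bound on $W_1$ finishes the proof.

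\textbf{Main obstacle.} The genuinely important point is the recentering in Step 2: it is the exact cancellation $2\int Y_\tau e^{-\tau}d\tau-x=2\int\bar Y_\tau e^{-\tau}d\tau$, together with the domination $\bar Y\le\widehat Y$ by a fixed process started at $0$, that makes the estimate uniform over all $x>0$ (including $x$ large, where naive bounds fail); after that, Step 3 is a routine Lyapunov computation. The remaining technicalities — justifying the comparison theorem and the drift estimate in the presence of the singular term $4a/y$ (replace it by $4a/(y+\varepsilon)$ and let $\varepsilon\downarrow0$), and justifying $X_1=0$ a.s.\ and the passage $s\uparrow1$ in the identity of Step 1 (via $\E[X_s^2]\to0$, or the preceding lemma) — are all standard.
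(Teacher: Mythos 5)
Your proposal is correct in outline (modulo the routine technicalities you flag), but it is a genuinely different argument from the one in the paper. The paper decomposes $\int_0^{t_0} ds/X_s$ dyadically as $1+\sum_n I_n$, where $I_n$ is the time integral over $\{2^{-n}\le X_s<2^{-n+1}\}$, bounds $\E^x[I_n]\le c_*2^{-n}$ directly from the explicit conditioned transition density $q_t(x,y)\,\phi(y,t_0-t)/\phi(x,t_0)$ (this is where $a>1/4$ enters, through integrability of $t^{-2a-\frac12}$), upgrades this to a geometric tail for each $I_n$ by iterating the strong Markov property, and sums over $n$. You instead integrate \eqref{conbessel} to get the exact identity $2a\int_0^1 ds/X_s=\int_0^1 X_s(1-s)^{-1}ds-x-W_1$, pass to the ergodic diffusion $Y_\tau=e^\tau X_{1-e^{-2\tau}}$, kill the $x$-dependence by the recentering $\bar Y_\tau=Y_\tau-xe^{-\tau}$ (your cancellation is exact), dominate $\bar Y$ by the $x$-independent process $\widehat Y$ started at $0$, and finish with a Lyapunov/Jensen bound; the computations check out ($dY_\tau=(4a/Y_\tau-Y_\tau)d\tau+\sqrt2\,dB_\tau$, $\E[X_s^2]=x^2(1-s)^2+(4a+1)s(1-s)$, $\mathcal{L}V=\lambda\bigl(1+4a-(1-\lambda)y^2\bigr)V$), and your route in fact yields a stronger conclusion: all exponential moments of $\int_0^1 ds/X_s$ are finite uniformly in $x$, so the tail decays faster than any exponential, while the paper only asserts some $u>0$. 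One step to phrase carefully: the standard comparison theorem requires the drift inequality for all values, and $4a/(y+xe^{-\tau})-y\le 4a/y-y$ fails for $y<0$; however, since $\bar Y$ and $\widehat Y$ are driven by the same Brownian motion, their difference has absolutely continuous paths, and any upcrossing of $\widehat Y$ by $\bar Y$ would have to proceed through values where both are positive, where the comparison of drifts does hold (using that $y\mapsto 4a/y-y$ is decreasing on $(0,\infty)$), so the domination stands -- just argue it pathwise rather than citing the theorem. In terms of trade-offs, the paper's occupation-time argument is elementary once the explicit density is in hand and adapts directly to other occupation functionals; your argument avoids the density entirely, is uniform in $x$ by construction, and does not visibly use $a>1/4$, at the cost of invoking SDE comparison and a drift condition for the singular diffusion started at the origin.
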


\begin{proof}

Let 
\[   I_n = \int_{0}^{ 1} \frac{ds}{X_s} \, 1\{2^{-n} \leq X_s
   < 2^{-n+1} \} \, ds.\]
   Our first goal is to show that there exists $c_* < \infty$ such
   that for all $x, t_0,n$,
\begin{equation}
\label{aug23.1}
   \E^x[I_n] \leq c_* \, 2^{-n} , \;\;\;\;\;
     \Prob^x\{I_n \geq c_* \, 2^{-n+1} \} \leq \frac 12 . 
   \end{equation}
   The second follows from the first by the Markov property; by scaling, 
  It suffices to show the first inequality  for $n=0$.
 By the strong Markov property.
we may assume that $1  \leq x \leq 2$; otherwise, we first run the process until
it reaches $[1,2]$.  Also, note that
\[      \E^x[I_0] \leq     \int_{1}^{2} \int_0^{t_0} \, \phi_t(x,y;t_0) \, dt \, dy .\] Using the immediate estimate
\[     \int_{1}^2\left[ \int_{t_0 - 1}^{t_0} \, \phi_t(x,y;t_0) \, dt 
 +   \int_{0}^{1} \, \phi_t(x,y;t_0) \, dt \right]dy \leq 2,\]
we see that it suffices to show that there exists $c$ such that for all
$1 \leq x,y \leq 2$ and $t_0 \geq 1$,
\[                \int_1^{t_0-1}  \phi_t(x,y;t_0) \, dt \leq c . \]
This can be done in a straightforward way by looking at the transition probability.
Indeed, if $1 \leq s \leq t_0 -1$ and $1 \leq x,y \leq 2$,
\[ \phi_t(x,y;t_0) \leq c \, \left[\frac{t_0}{t_0-t}\right]^{2a+\frac 12}\, \frac{1}{
t^{2a + \frac 12}}
  .\]
 For $t < t_0/2$ we estimate this by $c \, t^{-(2a + \frac 12)}$ and for
 $t \geq t_0/2$, we estimate this by $c\, (t_0 - t)^{-(2a + \frac 12)}.$
 Provided that $a > 1/4$ we see that this integral is uniformly bounded
 in $t_0$.  This gives \eqref{aug23.1}.

By scaling  it suffices to prove our main result for $t_0 = 1$.  Note that
\[            
 \int_0^{1} \frac{ds}{X_s} \,  ds \leq 1 + \sum_{n=1}^\infty I_n , \]
 where
 \[   I_n = \int_{0}^{ 1} \frac{ds}{X_s} \, 1\{2^{-n} \leq X_s
   < 2^{-n+1} \} \, ds.\]
  By iterating \eqref{aug23.1} using the strong Markov property, we
  see that for all positive integers $k$, $\Prob\{I_n \geq 2k \, c_*
   \, 2^{-n}\} \leq 2^{-k}$ and hence for all $r >0$,
   \[             \Prob\{I_n \geq r \, 2^{-n } \} \leq c'\, e^{-ur}, \]
  where $u = (\log 2)/(2c_*), c' = e^u$. In particular,
 \[  \Prob\left\{\sum_{n=1}^\infty I_n \geq 2r\right\}
   \leq \sum_{n=1}^\infty \Prob\left\{I_n \geq  r (2/3) 
  ^n \right\}
      \leq c'\sum_{n=1}^\infty \exp\left\{-ur(4/3)^n
      \right\} \leq c \, e^{-2u(2r)/3}.\]

\end{proof}
\bibliographystyle{plain}
\bibliography{cibib}
\end{document}